\documentclass[11pt,a4paper,fullpage]{article}
\usepackage{amsmath,amsthm}
\usepackage{amssymb}

\newtheorem{theorem}{Theorem}
\newtheorem{proposition}[theorem]{Proposition}
\newtheorem{lemma}[theorem]{Lemma}
\newtheorem{Remark}[theorem]{Remarks}
\newtheorem{corollary}[theorem]{Corollary}
\newtheorem{Conjecture}[theorem]{Conjecture}

\pagestyle{myheadings}
\markboth{}{Dedekind sums and mean square value of $L(1,\chi)$ over subgroups}

\title{{\bf Dedekind sums and mean square value of $L(1,\chi)$ over subgroups}}
\author{
St\'ephane R. LOUBOUTIN\\
Aix Marseille Universit\'e, CNRS, Centrale Marseille, I2M,\\ 
Marseille, France\\
stephane.louboutin@univ-amu.fr}
\date{}

\textwidth 16.5cm
\textheight 25cm
\oddsidemargin -0.5cm
\topmargin -0.5cm

\begin{document}
\bibliographystyle{alpha}
\maketitle

\centerline{To appear in Publ. Math. Debrecen}
\footnotetext{
2010 Mathematics Subject Classification. 
Primary. 11F20, 11R42. 11M20, 11R20, 11R29.

Key words and phrases. 
Dirichlet character. 
$L$-function. 
Mean square value. 
Relative class number. 
Dedekind sums. Cyclotomic field.}

\begin{abstract}
An explicit formula for the quadratic mean value at $s=1$ of the Dirichlet $L$-functions 
associated with the odd Dirichlet characters modulo $f>2$ is known. 
Here we present a situation where we could prove an explicit formula 
for the quadratic mean value at $s=1$ of the Dirichlet $L$-functions 
associated with the odd Dirichlet characters modulo not necessarily prime moduli $f>2$ 
that are trivial on a subgroup $H$ of the multiplicative group $({\mathbb Z}/f{\mathbb Z})^*$. 
This explicit formula involves summation $S(H,f)$ of Dedekind sums $s(h,f)$ over the $h\in H$. 
A result on some cancelation of the denominators of the $s(h,f)$'s when computing $S(H,f)$ is known. 
Here, we prove that for some explicit families of $f$'s and $H$'s 
this known result on cancelation of denominators is the best result one can expect. 
Finally, we surprisingly prove that for $p$ a prime, $m\geq 2$ and $1\leq n\leq m/2$, 
the values of the Dedekind sums $s(h,p^m)$ 
do not depend on $h$ 
as $h$ runs over the elements of order $p^n$ 
of the multiplicative cyclic group $({\mathbb Z}/p^m{\mathbb Z})^*$.
\end{abstract}

\section{A general mean square value formula in terms of Dedekind sums}
For $c\in {\mathbb Z}$ and $d>1$ such that $\gcd (c,d)=1$,
the {\it Dedekind sum} is defined by
\begin{equation}\label{defs}
s(c,d)
:=\frac{1}{4d}\sum_{a=1}^{d -1}\cot\left (\frac{\pi a}{d}\right )\cot\left (\frac{\pi ac}{d}\right )
\end{equation}
(see \cite[Chapter 3, Exercise 11]{Apo} or \cite[(26)]{RG}).
It depends only on $c$ modulo $d$.
We also set $s(c,1)=0$ for $c\in {\mathbb Z}$.
Notice that $s(c^*,d)=s(c,d)$ whenever $cc^*\equiv 1\pmod d$ 
(make the change of variables $n\mapsto nc$ in $s(c^*,d)$). 
We have a reciprocity law for Dedekind sums 
(see e.g. \cite[Theorem 3.7]{Apo}, \cite[(4)]{RG}) or \cite[(7) and (9)]{LouBKMS52})
$$s(c,d)+s(d,c) 
=\frac{c^2+d^2-3cd+1}{12cd}
\ \ \ \ \ (c>1,\ d>1,\ \gcd(c,d)=1).$$
We deduce (by induction) that $s(c,d)\in {\mathbb Q}$ 
and that (see also \cite[Lemma (a)(i)]{LouCMB36/37})
\begin{equation}\label{s1d}
s(1,d)
=\frac{(d-1)(d -2)}{12d}
\ \ \ \ \ (d\geq 1).
\end{equation}
For $d>1$ and $\gcd(c,d)=1$, we set 
\begin{equation}\label{deftildes}
\tilde s(c,d)
:=\frac{1}{4d}\sum_{\substack{n=1\\\gcd (n,d)=1}}^{d-1}
\cot\left (\frac{\pi n}{d}\right )\cot\left (\frac{\pi nc}{d}\right ).
\end{equation} 
Using \eqref{defs} we have
\begin{equation}\label{tildescddc}
\tilde s(c,d)
=\sum_{\delta\mid d}\frac{\mu (\delta)}{\delta}s(c,d/\delta).
\end{equation}
In particular, using \eqref{s1d} we obtain
\begin{equation}\label{stilde(1,d)} 
\tilde s(1,d) 
=\frac{\phi(d)}{12}\left (\prod_{p\mid d}\left (1+\frac{1}{p}\right )-\frac{3}{d}\right )
\ \ \ \ \ (d>1).
\end{equation}

For $f>2$, let $X_f$ be the group of order $\phi( f)$ of the Dirichlet characters modulo $f$. 
Let 
$X_f^-:=\{\chi\in X_f\hbox{ and }\chi (-1)=-1\}$ 
be the set of the $\phi (f)/2$ odd Dirichlet characters modulo $f$. 
If $H$ is a subgroup of order $n$ of the multiplicative group $({\mathbb Z}/f{\mathbb Z})^*$ 
which does not contain $-1$, 
we set 
$$X_f^-(H)=\{\chi\in X_f^-;\ \chi_{/H}=1\}.$$
Hence, $\# X_f^-(H) =\phi (f)/(2n)$.
Let $L(s,\chi)$ 
be the Dirichlet $L$-function associated with $\chi\in X_f$. 

\noindent\frame{\vbox{
\begin{theorem}
Let $H$ be a subgroup of order $n$ of the multiplicative group $({\mathbb Z}/f{\mathbb Z})^*$, 
with $f>2$. 
Assume that $-1\not\in H$, which is the case if $n$ is odd.
We have the mean square value formula
\begin{equation}\label{firstformula}
M(f,H)
:=\frac{1}{\# X_f^-(H)}\sum_{\chi\in X_f^-(H)}\vert L(1,\chi)\vert^2
=\frac{2\pi^2}{f}\tilde S(H,f),
\hbox{ where }
\tilde S(H,f)
:=\sum_{h\in H}\tilde s(h,f).
\end{equation}
In particular, by \eqref{stilde(1,d)}, 
we have the mean square value formula (see also \cite[Theorem 2]{LouBPASM64})
\begin{equation}\label{Mf1}
M(f,\{1\})
:=\frac{2}{\phi (f)}\sum_{\chi\in X_f^-}\vert L(1,\chi)\vert^2
=\tilde s(1,f)
=\frac{\pi^2}{6}\frac{\phi (f)}{f}\left (
\prod_{p\mid f} \left (1+\frac{1}{p}\right ) -\frac{3}{f}
\right ).
\end{equation} 
\end{theorem}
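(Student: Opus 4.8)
The plan is to start from the classical closed form of $L(1,\chi)$ for an odd character modulo $f$ and to reduce the averaged square to cotangent sums that are, by definition, the $\tilde s(h,f)$. The key input is that for every $\chi\in X_f^-$ (primitive or not)
\[
L(1,\chi)=\frac{\pi}{2f}\sum_{a=1}^{f-1}\chi(a)\cot\left(\frac{\pi a}{f}\right).
\]
I would obtain this by writing $L(s,\chi)=f^{-s}\sum_{a=1}^{f}\chi(a)\zeta(s,a/f)$, letting $s\to1$, and using the Laurent expansion of the Hurwitz zeta function, whose finite part is $-\psi(a/f)$. In the Gauss expansion of the digamma function the constant term drops because $\sum_a\chi(a)=0$, and the terms in $\cos(2\pi ka/f)$ drop because they are even in $a$ while $\chi$ is odd; only the cotangent part survives. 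Note that $\chi(a)=0$ for $\gcd(a,f)>1$, so all sums run effectively over $a\in(\mathbb Z/f\mathbb Z)^*$.

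Next I would form $|L(1,\chi)|^2=L(1,\chi)\overline{L(1,\chi)}$ and sum over $\chi\in X_f^-(H)$. Interchanging the finite summations and writing $bb^*\equiv1\pmod f$ gives
\[
\sum_{\chi\in X_f^-(H)}|L(1,\chi)|^2
=\frac{\pi^2}{4f^2}\sum_{\substack{1\le a,b\le f-1\\ \gcd(ab,f)=1}}\cot\left(\frac{\pi a}{f}\right)\cot\left(\frac{\pi b}{f}\right)\sum_{\chi\in X_f^-(H)}\chi(ab^{*}).
\]
The problem is thus reduced to evaluating the inner character sum. Selecting odd characters by the indicator $\tfrac12(1-\chi(-1))$ and applying orthogonality to the characters of $G/H$, where $G=(\mathbb Z/f\mathbb Z)^*$ and $n=\#H$, I get for any $g\in G$
\[
\sum_{\chi\in X_f^-(H)}\chi(g)=\frac{\phi(f)}{2n}\left(\mathbf{1}_{\{g\in H\}}-\mathbf{1}_{\{g\in -H\}}\right).
\]
Here the hypothesis $-1\notin H$ is exactly what guarantees that $H$ and $-H$ are disjoint (so that indeed $\#X_f^-(H)=\phi(f)/(2n)$ and the two indicators never coincide).

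Substituting $g=ab^{*}$ splits the double sum into the part where $a\equiv bh\pmod f$ and the part where $a\equiv -bh\pmod f$, with $h$ running over $H$. For fixed $h$, the definition \eqref{deftildes} gives
\[
\sum_{\substack{1\le b\le f-1\\ \gcd(b,f)=1}}\cot\left(\frac{\pi b}{f}\right)\cot\left(\frac{\pi hb}{f}\right)=4f\,\tilde s(h,f),
\]
so the first part equals $4f\,\tilde S(H,f)$; using $\cot(\pi(-x)/f)=-\cot(\pi x/f)$, the second part equals $-4f\,\tilde S(H,f)$, and with the minus sign already carried by the indicator $\mathbf{1}_{\{g\in -H\}}$ the bracket evaluates to $8f\,\tilde S(H,f)$. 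Collecting constants yields $\sum_{\chi}|L(1,\chi)|^2=\frac{\pi^2\phi(f)}{fn}\tilde S(H,f)$, and dividing by $\#X_f^-(H)=\phi(f)/(2n)$ produces \eqref{firstformula}. The particular case \eqref{Mf1} then follows at once by taking $H=\{1\}$ and inserting \eqref{stilde(1,d)}. The \emph{one genuinely delicate step} is the first: justifying the cotangent expression for $L(1,\chi)$ uniformly for imprimitive characters, i.e. controlling the limit $s\to1$ and verifying the exact cancellation of the non-cotangent digamma contributions forced by the oddness of $\chi$. Everything afterward is orthogonality bookkeeping, where the only point requiring care is the hypothesis $-1\notin H$.
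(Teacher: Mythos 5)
Your proof is correct and follows essentially the same route as the paper's source for \eqref{firstformula}, namely \cite[Proof of Theorem 2]{LouBPASM64}: the cotangent formula $L(1,\chi)=\frac{\pi}{2f}\sum_{a}\chi(a)\cot(\pi a/f)$ valid for all (possibly imprimitive) odd characters modulo $f$, followed by orthogonality for the characters trivial on $H$, with the hypothesis $-1\notin H$ guaranteeing that $H$ and $-H$ are disjoint. The only discrepancy is with the paper's display \eqref{Mf1}, whose middle term should read $\frac{2\pi^2}{f}\tilde s(1,f)$ rather than $\tilde s(1,f)$; your final expression agrees with the correct right-hand side.
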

}}

\begin{proof}
For \eqref{firstformula}, see \cite[Proof of Theorem 2]{LouBPASM64}.
\end{proof}

\noindent\frame{\vbox{
\begin{corollary}\label{MpHn}
Let $n\geq 1$ be an odd divisor of $p-1$, 
where $p\geq 3$ is an odd prime number. 
Let $H_n$ be the only subgroup of order $n$ 
of the multiplicative cyclic group $({\mathbb Z}/p{\mathbb Z})^*$.
We have the mean square value formula
$$M(p,H_n)
:=\frac{1}{\# X_p^-(H_n) }\sum_{\chi\in X_p^-(H_n)}\vert L(1,\chi)\vert^2
=\frac{2\pi^2}{p}S(H_n,p)
=\frac{\pi^2}{6}
\left (
1
+\frac{N(H_n,p)}{p}
\right ),$$
where 
$$S(H_n,p)
:=\sum_{h\in H_n}s(h,p)
\hbox{ and }
N(H_n,p)
:=12S(H_n,p) -p.$$
Moreover, by \cite[Theorem 6]{LouBKMS56}, for $n>1$ 
the rational number $2S(H_n,p)$ is an integer of the same parity as $(p-1)/2$ 
and $N(H_n,p) =12S(H_n,p)-p$ is an odd rational integer.
\end{corollary}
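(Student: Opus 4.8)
The plan is to specialize the general Theorem \eqref{firstformula} to $f=p$ and $H=H_n$. First I would check the hypotheses. Since $(\mathbb{Z}/p\mathbb{Z})^*$ is cyclic of order $p-1$, it possesses for each divisor $n\mid p-1$ a unique subgroup $H_n$ of order $n$, which justifies the phrasing ``the only subgroup of order $n$.'' Moreover, as $n$ is odd and $-1$ has order $2$ in $(\mathbb{Z}/p\mathbb{Z})^*$, the element $-1$ cannot belong to $H_n$. Hence the Theorem applies and yields $M(p,H_n)=\frac{2\pi^2}{p}\tilde S(H_n,p)$ with $\tilde S(H_n,p)=\sum_{h\in H_n}\tilde s(h,p)$.

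The crucial simplification is that the tilde disappears at a prime modulus. Indeed, the only divisors of $p$ are $1$ and $p$, so \eqref{tildescddc} gives
$$\tilde s(c,p)=\frac{\mu(1)}{1}s(c,p)+\frac{\mu(p)}{p}s(c,1)=s(c,p)-\frac{1}{p}s(c,1)=s(c,p),$$
where the last step uses the convention $s(c,1)=0$. Summing over $h\in H_n$ then gives $\tilde S(H_n,p)=S(H_n,p)$, and therefore $M(p,H_n)=\frac{2\pi^2}{p}S(H_n,p)$, which is the first asserted equality.

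The second equality is pure algebra. Substituting the definition $N(H_n,p)=12S(H_n,p)-p$ yields $1+N(H_n,p)/p=12S(H_n,p)/p$, so that $\frac{\pi^2}{6}\bigl(1+N(H_n,p)/p\bigr)=\frac{2\pi^2}{p}S(H_n,p)$, matching the middle term. This completes the chain of equalities in the displayed formula.

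Finally, the integrality and parity assertions for $n>1$ are quoted directly from \cite[Theorem 6]{LouBKMS56} rather than reproved here: that reference supplies the statement that $2S(H_n,p)$ is a rational integer of the same parity as $(p-1)/2$. I would only observe that the oddness of $N(H_n,p)$ is then immediate, since $N(H_n,p)=6\cdot\bigl(2S(H_n,p)\bigr)-p$ is an even integer minus the odd prime $p$. I expect no genuine obstacle in this argument: the entire content is the vanishing of the M\"obius correction term in \eqref{tildescddc} at a prime modulus, while the arithmetic depth—the integrality and parity of $2S(H_n,p)$—is imported wholesale from the cited reference.
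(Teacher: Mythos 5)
Your proposal is correct and follows exactly the route the paper intends: specialize \eqref{firstformula} to $f=p$, observe via \eqref{tildescddc} and the convention $s(c,1)=0$ that $\tilde s(c,p)=s(c,p)$ so the tilde drops, unwind the definition of $N(H_n,p)$, and import the integrality and parity of $2S(H_n,p)$ from the cited reference. The added observation that $-1\notin H_n$ because $n$ is odd, and the explicit deduction that $N(H_n,p)=6\cdot(2S(H_n,p))-p$ is odd, are both correct and consistent with the paper.
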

}}

By \cite[Theorem 1.1]{LMQJM}, we have 
$$N(p,H_n) =o(p)
\text{ and }
M(p,H_n) =\frac{\pi^2}{6}+o(1)$$
as $p$ tends to infinity 
and $H_n$ runs over the subgroup of $({\mathbb Z}/p{\mathbb Z})^*$ 
of odd orders 
$n\leq\frac{\log p}{3\log\log p}$. 
By \cite[Theorem 2.1 and Remark 2.2]{MS} we can relax this constraint on $n$ 
down to 
$\phi(n)=o(\log p)$, which is optimal.
Indeed, if $p$ runs over the Mersenne primes $p=2^n-1$, $n\geq 3$ odd and prime, 
and $H_n$ is the subgroup of order $n$ of $({\mathbb Z}/p{\mathbb Z})^*$ generated by $2$, 
then 
$N(p,H_n)
=2p-(6n-3)$, 
by \cite[Theorem 5.4]{LMCJM}.

\newpage
\section{A conjecture for the case of prime moduli}
According to our numerical computations it seems reasonable to conjecture the following:

\begin{Conjecture}\label{SpHnmod2}
(i). (See \cite[Section 2.2]{LouBKMS56} for some numerical evidence). 
Let $p$ range over the odd prime integers
 and $H$ over the subgroups of odd order of the multiplicative cyclic group $({\mathbb Z}/p{\mathbb Z})^*$. 
Then $N(H,p) =12S(H,p)-p\leq 0$ and hence $M(p,H)\leq\pi^2/6$
with a probability greater than or equal to $1/2$, i.e. 
$$\liminf_{B\rightarrow\infty} \rho(B)
\geq\frac{1}{2},
\hbox{ where }
\rho(B)
:=\frac{\#\{(p,n);\hbox{ $n\geq 1$ odd divides $p-1$, $N(H_n,p)\leq 0$ and $p\leq B$}\}}
{\#\{(p,n);\hbox{ $n\geq 1$ odd divides $p-1$ and $p\leq B$}\}}.$$
(ii). For a given odd integer $n\geq 3$, let $p$ range over the odd prime integers $p\equiv 1\pmod{2n}$. 
Let $H_n$ be the only subgroup of order $n$ 
of the multiplicative cyclic group $({\mathbb Z}/p{\mathbb Z})^*$. 
Then $N(H_n,p) =12S(H_n,p)-p\leq 0$ and hence $M(p,H_n)\leq\pi^2/6$
with a probability greater than or equal to $1/2$, i.e. 
$$\liminf_{B\rightarrow\infty} \rho_n(B)\geq\frac{1}{2},
\hbox{ where }
 \rho_n(B)
 :=\frac{\#\{p;\hbox{ $p\equiv 1\pmod {2n}$, $N(H_n,p)\leq 0$ and $p\leq B$}\}}
 {\#\{p;\hbox{ $p\equiv 1\pmod {2n}$ and $p\leq B$}\}}.$$
\end{Conjecture}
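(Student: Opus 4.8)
The plan is to reduce the sign of $N(H_n,p)=12S(H_n,p)-p$ to a value-distribution question and then to extract the density bound from a symmetry of the limiting law. First I would peel off the dominant term: since $1\in H_n$, isolating $h=1$ and using \eqref{s1d} gives $12s(1,p)-p=-3+2/p$, so that
\[
N(H_n,p)=-3+\frac{2}{p}+Y_n(p),\qquad Y_n(p):=12\sum_{h\in H_n\setminus\{1\}}s(h,p).
\]
Hence $N(H_n,p)\le 0$ is equivalent to $Y_n(p)\le 3-2/p$, and Conjecture \ref{SpHnmod2}(ii) asserts exactly that, as $p\to\infty$ over the primes $p\equiv1\pmod{2n}$, the primes with $Y_n(p)\le 3-2/p$ have lower density at least $1/2$. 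Part (i) is the analogous statement for the double average over $p$ and over the odd divisors $n$ of $p-1$, and it reduces to the same distributional input.

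Next I would aim to prove that $Y_n(p)$ has a limiting value distribution: as $p$ ranges over $p\equiv1\pmod{2n}$, the values $Y_n(p)$ equidistribute with respect to a probability measure $\mu_n$ on $\mathbb R$, and this $\mu_n$ is \emph{symmetric about $0$} with $\mu_n(\{0\})=0$. Granting this, for every continuity point $t\ge 0$ of its distribution function one has $\mu_n((-\infty,t])\ge\mu_n((-\infty,0])=\tfrac12$; applying this with $t$ slightly below $3$ gives $\liminf_B\rho_n(B)\ge\tfrac12$, which is the claim (and in fact yields strict inequality). It therefore suffices to construct $\mu_n$ and to verify its symmetry about the origin.

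To get at $\mu_n$ I would decompose $Y_n$ into exact-order pieces $T_d(p):=\sum_{\operatorname{ord}(h)=d}s(h,p)$, so that $Y_n(p)=12\sum_{1<d\mid n}T_d(p)$, and analyse each $T_d(p)$ through the cotangent definition \eqref{defs}. The available analytic input is the cancellation behind $N(H_n,p)=o(p)$ in \cite{LMQJM} and the sharper second-moment estimates of \cite{MS}; the natural tool for the symmetry is the oddness $s(-h,p)=-s(h,p)$, which on exact-order sums gives the clean identity $T_{2d}(p)=-T_d(p)$ for odd $d$. Because single Dedekind sums are heavy-tailed (Cauchy-type), I would work with characteristic functions rather than moments: symmetry of $\mu_n$ about $0$ is equivalent to its characteristic function being real-valued, and this is the quantity I would try to compute from the equidistribution of the residues $h\bmod p$, $h\in H_n$.

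The hard part will be the symmetry of $\mu_n$ about $0$. The obstruction is structural: $T_{2d}=-T_d$ flips the sign only by passing from odd order $d$ to even order $2d$, so a subgroup $H_n$ of odd order is never stable under $h\mapsto -h$, and there is no algebraic involution forcing $\sum_{1<d\mid n}T_d(p)$ to be symmetric. The symmetry, if it holds, must emerge analytically in the limit, from a delicate equidistribution of the tuples $(h/p)_{h\in H_n}$---equivalently of the splitting data of the $n$-th roots of unity modulo $p$---with enough uniformity to pin the \emph{median} of $Y_n$ at or below $0$. This is strictly stronger than the mean information supplied by \cite{LMQJM,MS}, since a law with negative mean can still have positive median; controlling the median is exactly the difficulty, and it is also what forces the conclusion to be stated as $\liminf\ge\tfrac12$ rather than as an exact density.
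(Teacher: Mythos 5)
The statement you set out to prove is labelled a \emph{Conjecture} in the paper: the author offers only numerical evidence (the tables for $n=5,7,9,11,13,15$) together with the single settled case $n=3$, where $N(H_3,p)=-1$ by Corollary \ref{thp3}. So there is no proof in the paper to compare yours against, and your proposal does not supply one either.

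Your elementary reductions are correct as far as they go --- $12s(1,p)-p=-3+2/p$ follows from \eqref{s1d}, and the oddness $s(-h,p)=-s(h,p)$ does give $T_{2d}(p)=-T_d(p)$ for odd $d$ --- but the argument then rests entirely on two assertions you do not prove: (a) that $Y_n(p)$ has a limiting value distribution $\mu_n$ as $p\to\infty$ over $p\equiv 1\pmod{2n}$, and (b) that $\mu_n$ is symmetric about $0$ (or at least has median $\leq 0$ and no atom there). Neither is known. The inputs you invoke, \cite{LMQJM} and \cite{MS}, give $N(H_n,p)=o(p)$ and moment bounds, i.e.\ information about means; as you yourself observe, a law with small or even negative mean can have positive median, so these results cannot yield $\liminf_B\rho_n(B)\geq 1/2$. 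And, as you also concede, the involution $h\mapsto -h$ never stabilizes a subgroup of odd order, so there is no algebraic mechanism producing the symmetry of $\mu_n$; saying it ``must emerge analytically in the limit'' is a restatement of the conjecture, not an argument. Your write-up is a reasonable heuristic framing of why the density should be about $1/2$ (and it is consistent with the paper's numerics, where $\rho_n(B)$ hovers slightly above $0.5$), but step (b) \emph{is} the content of the statement and remains open. Part (i) carries the further unaddressed issue of passing rigorously from fixed-$n$ statements to the double average over pairs $(p,n)$.
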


If $p\equiv 1\pmod 6$ then $N(H_3,p) =-1$ (Corollary \ref{thp3}), 
and point (ii) of Conjecture \ref{SpHnmod2} holds true for $n=3$. 
As an example of our computation, for $n=9$ we have the following numerical datas:
{\small
$$\begin{array}{|c|r|r|c|}
\hline
B&\#\{p\leq B;\hbox{ $p\equiv 1\pmod {18}$}\}
&\#\{p;\hbox{ $p\leq B\equiv 1\pmod {18}$ and $N(H_9,p)\leq 0$}\}
&\rho_9(B)\\
\hline
10^5&1592&838&0.52638\cdots\\
10^6&13 063&6 820&0.52208\cdots\\
10^7&110 772&56 779&0.51257\cdots\\
10^8&959 959&490 984&0.51146\cdots\\
10^9&8 474 566&4 317 341&0.50944\cdots\\
10^{10}&75 841 588&38 573 928&0.50861\cdots\\
10^{11}&686 345 266&348 497 259&0.50775\cdots\\
\hline
\end{array}$$
}
and
{\small
$$\begin{array}{|c|c|r|r|c|}
\hline
A&B&c_{prime}(A,B)&c_{\leq 0}(A,B)&\rho_9(A,B)\\
\hline
10^{10}&10^6&7 226&3 695&0.51134\cdots\\
10^{10}&10^7&72 505&36 731&0.50659\cdots\\
10^{10}&10^8&724 408&368 910&0.50925\cdots\\
10^{10}&10^9&7 224 235&3 672 183&0.50831\cdots\\
10^{10}&10^{10}&71 191 018&36 166 905&0.50802\cdots\\
\hline
10^{11}&10^6&6 558&3 301&0.50335\cdots\\
10^{11}&10^7&65 747&33 253&0.50577\cdots\\
10^{11}&10^8&658 053&333 640&0.50701\cdots\\
10^{11}&10^9&6 579 598&3 337 952&0.50731\cdots\\
10^{11}&10^{10}&65 673 261&33 320 115&0.50736\cdots\\
\hline
10^{12}&10^6&6 076&3 145&0.51761\cdots\\
10^{12}&10^7&60 361&30 850&0.51109\cdots\\
10^{12}&10^8&602 908&305 658&0.50697\cdots\\
10^{12}&10^9&6 031 209&3 056 473&0.50677\cdots\\
10^{12}&10^{10}&60 305 132&30 562 355&0.50679\cdots\\
\hline
10^{13}&10^6&5 564&2 782&0.50000\cdots\\
10^{13}&10^7&55 572&28 003&0.50390\cdots\\
10^{13}&10^8&557 166&282 186&0.50646\cdots\\
10^{13}&10^9&5 566 301&2 817 547&0.50617\cdots\\
10^{13}&10^{10}&55 673 215&28 179 022&0.50615\cdots\\
\hline
\end{array}$$
where 

$c_{prime}(A,B)
:=\#\{\hbox{$p\equiv 1\pmod {18};\ A\leq p\leq A+B$}\}$, 

$c_{\leq 0}(A,B)
:=\#\{\hbox{$p\equiv 1\pmod {18};\ A\leq p\leq A+B$ and $N(H_9,p)\leq 0$}\}$ 

and 
$\rho_9(A,B)
:=c_{\leq 0}(A,B)/c_{prime}(A,B)$.
}

For $n=5,7,11,13$ and $15$ we have the following numerical datas:
{\small
$$\begin{array}{|c|r|r|c|}
\hline
B
&\#\{p\leq B;\hbox{ $p\equiv 1\pmod {10}$}\}
&\#\{p\leq B;\hbox{ $p\equiv 1\pmod {10}$ and $N(H_5,p)\leq 0$}\}
&\rho_5(B)\\
\hline
10^5&2387&1335&0.55927\cdots\\
10^6&19 617&10 403&0.53030\cdots\\
10^7&166 104&86 814&0.52264\cdots\\
10^8&1 440 298&744 791&0.51710\cdots\\
10^9&12 711 386&6 540 511&0.51453\cdots\\
10^{10}&113 761 519&58 352 843&0.51294\cdots\\
\hline
\hline
B
&\#\{p\leq B;\hbox{ $p\equiv 1\pmod {14}$}\}
&\#\{p;\hbox{ $p\leq B\equiv 1\pmod {14}$ and $N(H_7,p)\leq 0$}\}
&\rho_7(B)\\
\hline
10^5&1593&823&0.51663\cdots\\
10^6&13 063&6770&0.51825\cdots\\
10^7&110 653&56 848&0.51375\cdots\\
10^8&960 023&490 970&0.51141\cdots\\
10^9&8 474 221&4 322 243&0.51004\cdots\\
10^{10}&75 840 762&38 584 999&0.50876\cdots\\
\hline
\hline
B
&\#\{p\leq B;\hbox{ $p\equiv 1\pmod {22}$}\}
&\#\{p\leq B;\hbox{ $p\equiv 1\pmod {22}$ and $N(H_{11},p)\leq 0$}\}
&\rho_{11}(B)\\
\hline
10^5&945&506&0.53544\cdots\\
10^6&7858&4099&0.52163\cdots\\
10^7&66 386&34 669&0.52223\cdots\\
10^8&576 103&300 012&0.52076\cdots\\
10^9&5 084 435&2 634 688&0.51818\cdots\\
10^{10}&45 504 543&23 481 241&0.51601\cdots\\
\hline
\hline
B
&\#\{p\leq B;\hbox{ $p\equiv 1\pmod {26}$}\}
&\#\{p\leq B;\hbox{ $p\equiv 1\pmod {26}$ and $N(H_{13},p)\leq 0$}\}
&\rho_{13}(B)\\
\hline
10^5&798&397&0.49749\cdots\\
10^6&6539&3307&0.50573\cdots\\
10^7&55 376&28 071&0.50691\cdots\\
10^8&480 132&242 633&0.50534\cdots\\
10^9&4 237 228&2 139 817&0.50500\cdots\\
10^{10}&37 919 477&19 125 424&0.50436\cdots\\
\hline
\hline
B
&\#\{p\leq B;\hbox{ $p\equiv 1\pmod {30}$}\}
&\#\{p\leq B;\hbox{ $p\equiv 1\pmod {30}$ and $N(H_{15},p)\leq 0$}\}
&\rho_{15}(B)\\
\hline
10^5&1189&648&0.54499\cdots\\
10^6&9807&5129&0.52299\cdots\\
10^7&83 003&42 787&0.51548\cdots\\
10^8&719 984&368 612&0.51197\cdots\\
10^9&6 355 189&3 240 295&0.50986\cdots\\
10^{10}&56 878 661&28 940 619&0.50881\cdots\\
\hline
\end{array}$$
}

\section{Mean square values of $L(1,\chi)$ over subgroups 
and bounds on relative class numbers of imaginary abelian number fields}
We refer the reader to \cite[Chapters 3, 4 and 11]{Was} for more background details. 
Let $K$ be an imaginary abelian number field of degree $m =2n>1$ and conductor $f_K>1$. 
Let $f>1$ be any integer divisible by $f_K$, 
i.e. let $K$ be an imaginary subfield of a cyclotomic number field ${\mathbb Q}(\zeta_f)$ 
(Kronecker-Weber's theorem). 
Let $w_K$ be its number of complex roots of unity. 
Let $Q_K\in\{1,2\}$ be its Hasse unit index. 
Hence, $Q_K=1$ if $K/{\mathbb Q}$ is cyclic 
(see e.g. \cite[Example 5, page 352]{Lem}). 
In particular, for any imaginary subfield $K$ of ${\mathbb Q}(\zeta_p)$ 
we have $Q_K=1$ and $w_K=2p$ if $K={\mathbb Q}(\zeta_p)$ 
but $w_K=2$ if $K\varsubsetneq {\mathbb Q}(\zeta_p)$ (see \cite[Exercise 2.3]{Was}).
Let $K^+$ be the maximal real subfield of $K$ of degree $n$ fixed by the complex conjugation. 
The class number $h_{K^+}$ of $K^+$ divides the class number $h_K$ of $K$. 
The {\it relative class number} of $K$ is defined by $h_K^- =h_K/h_{K^+}$. 
Let $d_K$ and $d_{K^+}$ be the absolute values of the discriminants of $K$ and $K^+$. 
For $\gcd (t,f)=1$, 
let $\sigma_t$ be the ${\mathbb Q}$-automorphism of ${\mathbb Q}(\zeta_f)$ 
defined by $\sigma_t(\zeta_f) =\zeta_f^t$. 
Then $t\mapsto\sigma_t$ a is canonical isomorphic 
from the multiplicative group $({\mathbb Z}/f{\mathbb Z})^*$ 
to the Galois group ${\rm Gal}({\mathbb Q}(\zeta_f)/{\mathbb Q})$. 
Set
$$H
:={\rm Gal}({\mathbb Q}(\zeta_f)/K) 
=\{t\in ({\mathbb Z}/f{\mathbb Z})^*;\ \alpha\in K\Rightarrow\sigma_t(\alpha)=\alpha\},$$ 
a subgroup of $({\mathbb Z}/f{\mathbb Z})^*$ of index $m$ and order $\phi (f)/m$. 
Notice also that $\# X_f^-(H) =n$.
Now, $-1\not\in H$ 
(notice that $\sigma_{-1}$ is the complex conjugation restricted to ${\mathbb Q}(\zeta_f)$). 
Any $\chi\in X_f$ is induced by a unique primitive Dirichlet character $\chi^*$ 
of conductor $f_{\chi^*}$ dividing $f$. 
We have the relative class number formula
\begin{equation}\label{formulahrelK}
h_K^-
=\frac{Q_Kw_K}{(2\pi)^n}\sqrt\frac{d_K}{d_{K^+}}\prod_{\chi\in X_f^-(H)} L(1,\chi^*).
\end{equation}
By \cite{LouPJAcad75}, dealing with primitive characters is not going to give explicit formulas.
However, noticing that 
$$L(1,\chi^*) 
=L(1,\chi)\prod_{q\mid f}\left (1-\frac{\chi^* (q)}{q}\right )^{-1}
\ \ \ \ \ (\chi\in X_f)$$ 
and using \eqref{formulahrelK} and the arithmetic-geometric mean inequality, 
we obtain 
\begin{equation}\label{hrelK}
h_K^-
\leq\frac{Q_Kw_K}{\Pi (f,H)}\sqrt\frac{d_K}{d_{K^+}}
\left (\frac{M(f,H)}{4\pi^2}\right )^{n/2},
\end{equation}
where
$$\Pi (f,H)
:=\prod_{q\mid f}
\prod_{\chi\in X_f^-(H)}\left (1-\frac{\chi^*(q)}{q}\right )
\ \ \ \ \ \hbox{($q$ runs over the prime divisors of $f$).}$$
Notice that $\Pi (f,H)=1$ whenever $f=p^m$ is power of a prime. 

For example, let $p\geq 3$ be an odd prime. 
Let $K$ be an imaginary subfield of degree $ (K:{\mathbb Q}) =m$ 
of the cyclotomic field ${\mathbb Q}(\zeta_p)$. 
Set $H ={\rm Gal}({\mathbb Q}(\zeta_p)/K)$, 
a subgroup of order $(p-1)/m$ of the multiplicative group $({\mathbb Z}/p{\mathbb Z})^*$. 
Then $d_K =p^{m-1}$ and $d_{K^+} =p^{m/2-1}$, 
by the conductor-discriminant formula. 
Therefore, by \eqref{hrelK} we have
\begin{equation}\label{hrelKbis}
h_K^-
\leq w_K\left (\frac{pM(p,H)}{4\pi^2}\right )^{m/4},
\hbox{ where }
m= (K:{\mathbb Q})
\hbox{ and }
w_K
=\begin{cases}
2&\hbox{if $K\subsetneq {\mathbb Q}(\zeta_p)$,}\\
2p&\hbox{if $K={\mathbb Q}(\zeta_p)$.}\\
\end{cases}
\end{equation}
In particular, for $H=\{1\}$ and using \eqref{Mf1} and \eqref{hrelKbis} we recover \cite{W}:
\begin{equation}\label{Mp1}
M(p,\{1\})
:=\frac{2}{p-1}\sum_{\chi\in X_p^-}\vert L(1,\chi)\vert^2
=\frac{\pi^2}{6}\left (1-\frac{1}{p}\right )\left (1-\frac{2}{p}\right )
\leq\frac{\pi^2}{6}
\ \ \ \ \ \hbox{($p\geq 3$)}
\end{equation}
and obtain the following upper bound
\begin{equation}\label{boundhpminusQzetap}
h_{{\mathbb Q}(\zeta_p)}^-
\leq 2p\left (\frac{pM(p,\{1\})}{4\pi^2}\right )^{(p-1)/4}
\leq 2p\left (\frac{p}{24}\right )^{(p-1)/4}.
\end{equation}
The mean square value of $L(1,\chi)$, $\chi\in X_p^-$ being asymptotic to $\pi^2/6$, 
by \eqref{Mp1},
for $K\subsetneq {\mathbb Q}(\zeta_p)$ we might expect to have bounds close to 
\begin{equation}\label{expected}
M(p,H)\leq\pi^2/6
\hbox{ and }
h_K^-\leq 2(p/24)^{n/2},
\end{equation}
by \eqref{hrelKbis}.
At least, according to Corollary \ref{MpHn} and Conjecture \ref{SpHnmod2} these bounds should hold true 
with probability greater than or equal to $1/2$.
However, it is hopeless to expect such a universal mean square upper bound. 
Indeed (e.g. see \cite{CK}), 
it is likely that there are infinitely many imaginary abelian number fields of a given degree $m=2n$ 
and prime conductors $p$ for which 
$$M(p,H) =\frac{1}{n}\sum_{\chi\in X_p^-(H)}\vert L(1,\chi)\vert^2
\geq\Bigl ( \prod_{\chi\in X_p^-(H)}L(1,\chi)\Bigr )^{2/n}
\gg (\log\log p)^{2}.$$

\section{An explicit formula for some mean square values of $L(1,\chi)$ over subgroups}
Formula \eqref{Mf1} gives an explicit formula for $M(f,\{1\})$ for $f>2$.
We now present in Theorem \ref{M(f,H)} 
the only situation where we could get an explicit formula for $M(f,H)$ 
for non trivial subgroups $H$ of the multiplicative group $({\mathbb Z}/f{\mathbb Z})^*$ 
where $f$ is not necessarily prime.

\begin{lemma}\label{aba'b'}
Let $f=\prod_{k=1}^tp_k^{e_k}$ be an integer 
such that all its $t\geq 1$ distinct prime divisors $p_k$ are equal to $1$ modulo $3$.
Then the set 
$$E_f
:=\{a/b\in ({\mathbb Z}/f{\mathbb Z})^*;\ f=a^2+ab+b^2\hbox{ and } \gcd (a,b)=1\}$$
is of cardinal $2^t$ 
and its elements are of order $3$ in the multiplicative group $({\mathbb Z}/f{\mathbb Z})^*$.\\ 
Moreover, if $\delta\geq 1$ divides $f$, 
then there exist $a'$ and $b'$ such that $\delta=a'^2+a'b'+b'^2$, 
$\gcd(a',b')=1$ 
and $a/b=a'/b'$ in $({\mathbb Z}/\delta{\mathbb Z})^*$. 
\end{lemma}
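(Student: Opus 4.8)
The plan is to recognize $Q(a,b):=a^2+ab+b^2$ as the norm form of the ring of Eisenstein integers $\mathbb{Z}[\omega]$, where $\omega=e^{2\pi i/3}$ satisfies $\omega^2+\omega+1=0$: indeed $N(a-b\omega)=a^2+ab+b^2$. I would use that $\mathbb{Z}[\omega]$ is a principal ideal domain with unit group $\{\pm1,\pm\omega,\pm\omega^2\}$ of order $6$, and that a rational prime splits in $\mathbb{Z}[\omega]$ exactly when it is $\equiv 1\pmod 3$ (the only ramified prime being $3$). All of this is classical and I would cite it rather than reprove it.

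First I would show $E_f$ is contained in the set $R_f$ of roots of $X^2+X+1$ in $(\mathbb{Z}/f\mathbb{Z})^*$, and that each such root has order $3$. Given $a/b\in E_f$, from $\gcd(a,b)=1$ and $f=a^2+ab+b^2$ one checks that $\gcd(a,f)=\gcd(b,f)=1$, so $x:=a/b\in(\mathbb{Z}/f\mathbb{Z})^*$; dividing $a^2+ab+b^2\equiv 0\pmod f$ by $b^2$ gives $x^2+x+1\equiv 0$, hence $x^3\equiv 1$, and $x\neq 1$ because every prime divisor of $f$ is $\equiv 1\pmod 3$ so $f\nmid 3$. Thus $x$ has order $3$ and $E_f\subseteq R_f$. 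To count $R_f$ I would use the Chinese Remainder Theorem: $R_f\cong\prod_k R_{p_k^{e_k}}$, and since $(\mathbb{Z}/p_k^{e_k}\mathbb{Z})^*$ is cyclic of order $\phi(p_k^{e_k})$ divisible by $3$, its unique order-$3$ subgroup supplies exactly two roots of $X^2+X+1$. Hence $\#R_f=2^t$ and therefore $\#E_f\leq 2^t$.

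To obtain equality I would construct $2^t$ distinct elements of $E_f$ from factorizations in $\mathbb{Z}[\omega]$. Writing $p_k=\pi_k\bar\pi_k$ with $\pi_k,\bar\pi_k$ non-associate primes, the element $\pi_k^{e_k}$ is primitive (divisible by no rational integer $>1$) of norm $p_k^{e_k}$, and likewise $\bar\pi_k^{e_k}$. For each subset $S\subseteq\{1,\dots,t\}$ set $\beta_S:=\prod_{k\in S}\bar\pi_k^{e_k}\prod_{k\notin S}\pi_k^{e_k}$, which by unique factorization is primitive of norm $f$; writing $\beta_S=a_S-b_S\omega$ yields $f=a_S^2+a_Sb_S+b_S^2$ with $\gcd(a_S,b_S)=1$, so $x_S:=a_S/b_S\in E_f$. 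The crux of the argument, which I expect to be the main obstacle, is distinctness of the $x_S$: reducing modulo $\pi_k^{e_k}$ through the isomorphism $\mathbb{Z}[\omega]/(\pi_k^{e_k})\cong\mathbb{Z}/p_k^{e_k}\mathbb{Z}$ sending $\omega$ to a fixed root $\omega_k$ of $X^2+X+1$ shows $x_S\equiv\omega_k\pmod{p_k^{e_k}}$ when $k\notin S$, while reduction modulo $\bar\pi_k^{e_k}$ gives $x_S\equiv\omega_k^2\pmod{p_k^{e_k}}$ when $k\in S$; as $\omega_k\neq\omega_k^2$, the residue at $p_k^{e_k}$ detects whether $k\in S$, so $S\mapsto x_S$ is injective. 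Combined with $\#E_f\leq 2^t$ this gives $E_f=R_f$ of cardinality $2^t$, and all its elements have order $3$.

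Finally, for the \emph{moreover} statement with $\delta=\prod_k p_k^{d_k}\mid f$ (the case $\delta=1$ being trivial modulo $1$), given $a/b=x_S$ I would set $\beta'_S:=\prod_{k\in S}\bar\pi_k^{d_k}\prod_{k\notin S}\pi_k^{d_k}$, primitive of norm $\delta$, and write $\beta'_S=a'-b'\omega$ to get $\delta=a'^2+a'b'+b'^2$ with $\gcd(a',b')=1$. The same reduction computation gives $a'/b'\equiv\omega_k$ (resp.\ $\omega_k^2$) modulo $p_k^{d_k}$ for $k\notin S$ (resp.\ $k\in S$), matching the residues of $x_S$ because the quotient maps $\mathbb{Z}[\omega]/(\pi_k^{e_k})\to\mathbb{Z}[\omega]/(\pi_k^{d_k})$ are compatible with the choice of $\omega_k$; by the Chinese Remainder Theorem this yields $a/b\equiv a'/b'\pmod\delta$, as required.
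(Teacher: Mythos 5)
Your proposal is correct, and it rests on the same foundation as the paper's proof: the identification of $a^2+ab+b^2$ as the norm form of the Eisenstein integers (the paper uses the basis $\{1,\zeta_6\}$ of the same ring), the splitting $p_k=\pi_k\bar\pi_k$, and the $2^t$ products of conjugate prime powers producing the representations of $f$. Where you diverge is in how the count is closed off and in the distinctness argument. The paper proves that the map from the $2^t$ factorizations to $E_f$ is both injective (via the observation that $b_2\alpha_1-b_1\alpha_2=a_1b_2-a_2b_1$, so a prime $\pi_k$ dividing $\alpha_1$ but not $\alpha_2$ forces $a_1/b_1\neq a_2/b_2$) and surjective (any representation is a unit multiple $\eta\alpha_f$, and multiplication by $\zeta_6$ is checked not to change the ratio $a/b$); you instead bound $\#E_f$ from above by the $2^t$ roots of $X^2+X+1$ via the Chinese Remainder Theorem and cyclicity of $(\mathbb{Z}/p_k^{e_k}\mathbb{Z})^*$, and prove injectivity by reading off, from the residue of $x_S$ modulo $p_k^{e_k}$, which of $\omega_k,\omega_k^2$ it equals and hence whether $k\in S$. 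Your route avoids the unit-ambiguity analysis entirely and makes the order-$3$ claim (which the paper leaves implicit) explicit, at the cost of invoking the structure of $(\mathbb{Z}/p^e\mathbb{Z})^*$; it also yields the extra information $E_f=R_f$. For the ``moreover'' part the paper multiplies out $\alpha=\beta\gamma$ explicitly to see $a/b=a'/b'$ in $(\mathbb{Z}/\delta\mathbb{Z})^*$, whereas you rerun the residue computation compatibly with the quotient maps; both are sound, the paper's being slightly more hands-on and yours more structural.
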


\begin{proof}
Let $p\equiv 1\pmod 3$ be prime. 
Then $p=\pi\bar\pi$ splits in the principal ideal domain ${\mathbb Z}[\zeta_6]$ 
of ${\mathbb Z}$-basis $\{1,\zeta_6\}$, where $\zeta_6$ is a primitive complex sixth root of unity 
and $\pi\in {\mathbb Z}[\zeta_6]$ is irreducible in ${\mathbb Z}[\zeta_6]$.
For each $k$, 
fix a factorisation of $p_k$ 
into a product of $2$ complex conjugates irreducible elements of ${\mathbb Z}[\zeta_6]$.
By taking 
$\alpha 
=a+b\zeta_6
=\prod_{k=1}^t\pi_k^{e_k}$, 
where $\pi_k$ is any of the $2$ given complex conjugate irreducible factors of $p_k$, 
we get $2^t$ ways to write $f =\alpha\bar\alpha =a^2+ab+b^2$ with $\gcd(a,b)=1$. 
Moreover, given two distinct such $\alpha$'s, 
say $\alpha_1 =a_1+b_1\zeta_6$ and $\alpha_2=a_2+b_2\zeta_6$, 
there exists some index $k$ for which $\pi_k$ divides $\alpha_1$ but does not divide $\alpha_2$. 
Since $b_2\alpha_1-b_1\alpha_2=a_1b_2-a_2b_1$ and $\gcd(b_1,f)=1$, 
it follows that $\pi_k$ does not divide $b_2\alpha_1-b_1\alpha_2$, 
which implies that $p_k$ does not divide $a_1b_2-a_2b_1$. 
Hence, $a_1/b_1\neq a_2/b_2$ in $({\mathbb Z}/f{\mathbb Z})^*$. 
Conversely, if $f=a^2+ab+b^2$ with $\gcd(a,b)=1$ 
then $f =\alpha\bar\alpha$ where $\alpha =a+b\zeta_6$ with $\gcd(a,b)=1$. 
Therefore, $\alpha =\eta\alpha_f$ for one of the $6$ invertible elements 
$\eta\in\{\zeta_6^k;\ 0\leq k\leq 5\}$ of ${\mathbb Z}[\zeta_6]$, 
where $\alpha_f:=\prod_{k=1}^t\pi_k^{e_k} =a_f+b_f\zeta_6$ 
is also such that $f=a_f^2+a_fb_f+b_f^2$. 
However, $a/b=a_f/b_f$ in $({\mathbb Z}/p{\mathbb Z})^*$. 
Indeed, 
$\zeta_6\alpha_f =-b_f+(a_f+b_f)\zeta_6$ and $-b_f/(a_f+b_f)=a _f/b_f$ in $({\mathbb Z}/p{\mathbb Z})^*$.\\ 
Finally, let $\delta =\prod_{k=1}^tp_k^{e_k'}$ be a divisor of $f$. 
Set $\beta:=\prod_{k=1}^t\pi_k^{e_k'} =a'+b'\zeta_6$, 
that divides $\alpha$ in ${\mathbb Z}[\zeta_6]$, say $\alpha =\beta\gamma$ with $\gamma =a''+b''\zeta_6$. 
Therefore, 
$a+b\zeta_6
=(a'+b'\zeta_6)(a''+b''\zeta_6) 
=(a'a''-b'b'')+(a'b''+a''b'+b'b'')\zeta_3$, 
hence $a=a'a''-b'b''$, $b=a'b''+a''b'+b'b''$, $\gcd(a',b')=1$
and noticing that $\delta =\beta\bar\beta =a'^2+a'b'+b'^2$ we obtain
$$a/b
=(a'a''-b'b'')/(a'b''+a''b'+b'b'')
=a'/b'$$ 
in $({\mathbb Z}/\delta{\mathbb Z})^*$.
\end{proof}

We can use $E_f$ to explicitly construct $2^{t-1}$ subgroups $\{1,a/b,b/a\}$ of order $3$ 
of the multiplicative group $({\mathbb Z}/f{\mathbb Z})^*$. 
Notice that by the Chinese remainder theorem 
there are $(3^t-1)/2$ subgroups of order $3$ in the multiplicative group $({\mathbb Z}/f{\mathbb Z})^*$
We will now prove the following new result:

\noindent\frame{\vbox{
\begin{theorem}\label{M(f,H)}
Let $f>1$ be an integer such that all its $t\geq 1$ distinct prime divisors are equal to $1$ modulo $3$. 
Then, for the $2^{t-1}$ subgroups $H_3=\{1,a/b,b/a\}$ 
of the multiplicative group $({\mathbb Z}/f{\mathbb Z})^*$ 
generated by the $2^t$ elements $a/b\in E_f$ 
we have
$$\tilde S(f,H_3)
:=\sum_{h\in H_3}\tilde s(h,f)
=\frac{\phi(f)}{12}\left (\prod_{p\mid f}\left (1+\frac{1}{p}\right )-\frac{1}{f}\right )$$ 
and (with the notation in \eqref{firstformula}, and compare with \eqref{Mf1})
$$M(f,H_3)
=\frac{\pi^2}{6}\frac{\phi(f)}{f}\left (\prod_{p\mid f}\left (1+\frac{1}{p}\right )-\frac{1}{f}\right ).$$
\end{theorem}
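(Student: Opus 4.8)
The plan is to reduce the whole statement to a single clean Dedekind-sum evaluation. By \eqref{firstformula} we have $M(f,H_3)=\frac{2\pi^2}{f}\tilde S(f,H_3)$, so the two displayed formulas are equivalent and it suffices to compute $\tilde S(f,H_3)$. Writing $h:=a/b$ and noting that $b/a=h^{-1}$, the substitution $n\mapsto nh$ (which permutes the residues coprime to $f$) in \eqref{deftildes} gives $\tilde s(h^{-1},f)=\tilde s(h,f)$, exactly as the excerpt does for $s$. Hence $\tilde S(f,H_3)=\tilde s(1,f)+2\tilde s(h,f)$, and by the value of $\tilde s(1,f)$ in \eqref{stilde(1,d)} the asserted formula for $\tilde S(f,H_3)$ is equivalent to the single identity $\tilde s(h,f)=\frac{\phi(f)}{12f}$.

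The heart of the matter is the following core claim: for every $m>1$ of the form $m=a^2+ab+b^2$ with $\gcd(a,b)=1$, and $h\equiv a/b$, one has $s(h,m)=\frac{m-1}{12m}$. Granting this, I would finish by M\"obius inversion. By \eqref{tildescddc}, $\tilde s(h,f)=\sum_{\delta\mid f}\frac{\mu(\delta)}{\delta}s(h,f/\delta)$, and only squarefree $\delta$ contribute. For each such $\delta$, Lemma \ref{aba'b'} applied to the divisor $f/\delta$ of $f$ produces $a',b'$ with $f/\delta=a'^2+a'b'+b'^2$ and $a/b\equiv a'/b'\pmod{f/\delta}$, so that $s(h,f/\delta)=s(a'/b',f/\delta)=\frac{(f/\delta)-1}{12(f/\delta)}=\frac1{12}\bigl(1-\tfrac{\delta}{f}\bigr)$ by the core claim (the convention $s(\,\cdot\,,1)=0$ covers $\delta=f$). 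Substituting and using $\sum_{\delta\mid f}\frac{\mu(\delta)}{\delta}=\frac{\phi(f)}{f}$ together with $\sum_{\delta\mid f}\mu(\delta)=0$ (valid since $f>1$) collapses the sum to $\frac1{12}\cdot\frac{\phi(f)}{f}$, which is exactly $\frac{\phi(f)}{12f}$.

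To prove the core claim I would pass to the modular interpretation. Since $m=b^2(h^2+h+1)$ with $\gcd(b,m)=1$ we have $h^2+h+1\equiv0\pmod m$; taking $h$ to be its least positive residue and writing $h^2+h+1=km$ with $k\in\mathbb{Z}$, the matrix $M=\begin{pmatrix}-1-h & -k\\ m & h\end{pmatrix}$ lies in $\mathrm{SL}_2(\mathbb{Z})$ (its determinant is $-(h^2+h)+km=1$) and has trace $-1$, hence is elliptic of order $3$. Rademacher's formula for the function $\Phi(\gamma)=\frac{\alpha+\delta}{\gamma_0}-12\,s(\delta,\gamma_0)$ attached to $\gamma=\begin{pmatrix}\alpha&\beta\\ \gamma_0&\delta\end{pmatrix}$ with $\gamma_0>0$ then gives $\Phi(M)=\frac{-1}{m}-12\,s(h,m)$. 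Because $\Phi$ is a class function on $\mathrm{SL}_2(\mathbb{Z})$ and $M$ is $\mathrm{SL}_2(\mathbb{Z})$-conjugate to the standard order-$3$ element $S=\begin{pmatrix}0&-1\\ 1&-1\end{pmatrix}$ (multiplication by $\zeta_3$ in the oriented basis $\{1,\zeta_3\}$ of $\mathbb{Z}[\zeta_3]$, for which $\Phi(S)=-1$), I would conclude $\Phi(M)=-1$ and hence $s(h,m)=\frac{m-1}{12m}$.

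The main obstacle is precisely this conjugacy/orientation step: there are two $\mathrm{SL}_2(\mathbb{Z})$-conjugacy classes of trace-$(-1)$ order-$3$ elements (an element and its inverse, separated by $\Phi=-1$ versus $\Phi=+1$), and I must show $M$ lies in the $\Phi=-1$ class rather than its mirror image. I expect to settle this using the $\mathbb{Z}[\zeta_6]$-structure already exploited in Lemma \ref{aba'b'}: realizing $M$ as multiplication by $\zeta_3$ on a positively oriented $\mathbb{Z}$-basis of the relevant ideal forces conjugacy to $S$, the positive orientation being guaranteed by the sign of the lower-left entry $m>0$ (equivalently, by the elliptic fixed point of $M$ sitting in the upper half-plane with the correct rotation sense). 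A more computational alternative that avoids $\Phi$ entirely is the Rademacher three-term reciprocity law applied to the pairwise coprime triple $\{a,b,a+b\}$, whose squares satisfy $a^2+b^2+(a+b)^2=2m$; there the difficulty migrates to correctly pinning down the modular inverses entering the three-term sum.
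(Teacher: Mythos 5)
Your reduction coincides exactly with the paper's own proof: the paper likewise writes $\tilde S(f,H_3)=\tilde s(1,f)+2\tilde s(a/b,f)$, evaluates $\tilde s(1,f)$ by \eqref{stilde(1,d)}, and gets $\tilde s(a/b,f)=\phi(f)/(12f)$ by the same M\"obius computation through \eqref{tildescddc}, using Lemma \ref{aba'b'} to replace $a/b$ modulo each divisor $f/\delta$ by a representation $f/\delta=a'^2+a'b'+b'^2$ (this is the content of Lemma \ref{valuesab}). Your ``core claim'' --- that $s(a/b,m)=\frac{m-1}{12m}$ whenever $m=a^2+ab+b^2$ with $\gcd(a,b)=1$ --- is precisely the one input the paper does not reprove but cites, namely \cite[Lemma 4]{LouBPASM64} (see also \cite[Lemma 6.1]{LMCJM}). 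Up to that point your argument is complete and correct, and identical in structure to the paper's.

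The genuine gap is in your attempted proof of the core claim, and you flag it yourself: you must decide which of the two $\mathrm{SL}_2(\mathbb{Z})$-conjugacy classes of order-$3$ elliptic elements your matrix $M$ lies in, and the orientation argument is only announced (``I expect to settle this\dots''), not carried out. Until it is, the method pins $s(h,m)$ down only to one of two values, one for each class, and does not single out $\frac{m-1}{12m}$. A smaller but real inaccuracy: $\Phi$ itself is \emph{not} a class function on $\mathrm{SL}_2(\mathbb{Z})$; it is the Rademacher symbol $\Psi(\gamma)=\Phi(\gamma)-3\,\mathrm{sign}\bigl(c(a+d)\bigr)$ that is conjugation-invariant. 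In your situation both $M$ and your reference element $S$ have positive lower-left entry and trace $-1$, so the correction terms coincide and the numerics survive, but the statement as written is false in general and should not be relied on. Since the fact you are trying to establish is exactly the cited \cite[Lemma 4]{LouBPASM64}, the cleanest repair is to quote it as the paper does; if you want a self-contained proof, the conjugacy/orientation step (or the three-term reciprocity alternative you mention) must actually be completed.
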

}}

\begin{proof}
We have 
$\tilde S(f,H_3)
=\tilde s(1,f)+2\tilde s(a/b,f)$ 
and we use \eqref{stilde(1,d)} and Lemma \ref{valuesab}.
\end{proof}

\begin{lemma}\label{valuesab}
Assume that $f =a^2+ab+b^2>3$, where $a,b\in {\mathbb Z}$ and $\gcd (a,b) =1$ 
(i.e. assume that all the prime divisors of $f$ are equal to $1$ mod $3$).
Set $h_3=a/b$, of order $3$ in the multiplicative group $({\mathbb Z}/f{\mathbb Z})^*$. 
Then for any divisor $\delta\geq 1$ of $f$ we have $s(h_3,\delta)=\frac{\delta-1}{12\delta}$. 
By \eqref{tildescddc}, it follows that 
$$\tilde s(h_3,f)
=\sum_{\delta\mid f}\frac{\mu(\delta)}{\delta}s(h_3,f/\delta)
=\sum_{\delta\mid f}\mu(\delta)\frac{f/\delta-1}{12f}
=\frac{\phi(f)}{12f}.$$
In particular, $\tilde s(h_3,f)$ does not depend on the choice of $h_3$ in $E_f$.
\end{lemma}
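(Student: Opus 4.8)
The plan is to reduce everything to the single evaluation $s(h_3,\delta)=\frac{\delta-1}{12\delta}$, since the closed form for $\tilde s(h_3,f)$ then follows at once from \eqref{tildescddc} by the Möbius computation already displayed in the statement. So I fix a divisor $\delta\ge 1$ of $f$; the case $\delta=1$ is just the convention $s(\cdot,1)=0$, so I assume $\delta>1$ and write $c$ for the residue of $h_3=a/b$ modulo $\delta$. The first thing I would record is that $c$ is a genuine element of order $3$ modulo $\delta$: from $f=a^2+ab+b^2$ one gets $h_3^2+h_3+1=f/b^2\equiv 0\pmod f$, hence $c^2+c+1\equiv 0\pmod\delta$; this forces $c^3\equiv 1$ (so $c$ is a unit with $c^{-1}\equiv c^2$), while $c\equiv 1$ would give $3\equiv 0\pmod\delta$, impossible since $3\nmid f$. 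In particular $s(c^2,\delta)=s(c,\delta)$ by the inversion symmetry $s(c^*,d)=s(c,d)$ noted after \eqref{defs}.

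The heart of the argument is to introduce the symmetric three-term sum
$$U:=\sum_{n=1}^{\delta-1}\Bigl(\cot\tfrac{\pi n}{\delta}\cot\tfrac{\pi nc}{\delta}+\cot\tfrac{\pi nc}{\delta}\cot\tfrac{\pi nc^2}{\delta}+\cot\tfrac{\pi nc^2}{\delta}\cot\tfrac{\pi n}{\delta}\Bigr)$$
and to evaluate it in two different ways. On the one hand, reindexing the three inner products via definition \eqref{defs} gives $U=12\delta\,s(c,\delta)$: the first and third sums are $4\delta\,s(c,\delta)$ and $4\delta\,s(c^2,\delta)$ directly, and in the middle sum the substitution $m\equiv nc\pmod\delta$ (a bijection of $\{1,\dots,\delta-1\}$, which sends $nc^2$ to $mc$ because $c^3\equiv 1$) turns it into $4\delta\,s(c,\delta)$ as well; one then uses $s(c^2,\delta)=s(c,\delta)$. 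On the other hand, for each fixed $n$ the residues $n,nc,nc^2$ modulo $\delta$ are all nonzero and satisfy $n+nc+nc^2\equiv 0\pmod\delta$, so the three corresponding angles sum to an integer multiple of $\pi$; the elementary identity $\cot x\cot y+\cot y\cot z+\cot z\cot x=1$, valid whenever $x+y+z\equiv 0\pmod\pi$ and all three cotangents are finite, makes each summand of $U$ equal to $1$. Hence $U=\delta-1$, and comparing the two evaluations yields $s(c,\delta)=\frac{\delta-1}{12\delta}$.

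The remaining steps are routine. I would justify the cotangent identity by eliminating $z$ through $\cot z=-\cot(x+y)$, legitimate since $z\equiv-(x+y)\pmod\pi$, and I would check that no angle is a multiple of $\pi$, which holds because $c$ is a unit modulo $\delta$ and so $n,nc,nc^2$ are nonzero residues for $1\le n\le\delta-1$. Finally, feeding $s(h_3,f/\delta)=\frac{f/\delta-1}{12(f/\delta)}$ into \eqref{tildescddc} and simplifying gives $\tilde s(h_3,f)=\frac{1}{12f}\sum_{\delta\mid f}\mu(\delta)(f/\delta-1)=\frac{\phi(f)}{12f}$ for $f>1$, as claimed, a value visibly independent of the choice of $h_3\in E_f$. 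The only genuine obstacle is the bookkeeping that identifies $U$ with $12\delta\,s(c,\delta)$ together with the verification that the three-term cotangent identity applies uniformly across all $n$; once the reduction to order-$3$ elements modulo each divisor $\delta$ is in place, both are mechanical.
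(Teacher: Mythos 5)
Your proof is correct, but it takes a different route from the paper. The paper disposes of the key identity $s(h_3,\delta)=\frac{\delta-1}{12\delta}$ by two citations: first the divisor statement of Lemma \ref{aba'b'}, which produces a representation $\delta=a'^2+a'b'+b'^2$ with $h_3\equiv a'/b'\pmod\delta$, and then \cite[Lemma 4]{LouBPASM64}, which evaluates $s(a'/b',\delta)$ for such a representation. You instead prove the evaluation from scratch via the three-term identity $\cot x\cot y+\cot y\cot z+\cot z\cot x=1$ for $x+y+z\equiv 0\pmod\pi$, applied to the angles $\pi n/\delta$, $\pi nc/\delta$, $\pi nc^2/\delta$, together with the reindexing $m\equiv nc$ and the symmetry $s(c^2,\delta)=s(c^{-1},\delta)=s(c,\delta)$; all of these steps check out, including the degenerate case where one cotangent vanishes and the verification that no angle is a multiple of $\pi$. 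A genuine bonus of your approach is that it isolates the true hypothesis: you only need $c^2+c+1\equiv 0\pmod\delta$ with $c$ a unit and $3\nmid\delta$, which follows immediately from $\delta\mid f=a^2+ab+b^2$, so the second (divisor) half of Lemma \ref{aba'b'} becomes unnecessary for this lemma. What the paper's route buys is brevity and reuse of an already published lemma; what yours buys is a self-contained argument under a visibly weaker hypothesis. The concluding Möbius computation is the same in both.
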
 

\begin{proof}
By Lemma \ref{aba'b'} we have 
$s(h_3,\delta) =s(h_3',\delta)$ 
where $h_3'=a'/b'$ with $\delta =a'^2+a'b'+b'^2$ 
and $\gcd(a',b')=1$. 
By \cite[Lemma 4]{LouBPASM64} (see also \cite[Lemma 6.1]{LMCJM}), 
we have $s(h_3',\delta) =\frac{\delta-1}{12\delta}$.
\end{proof}

\begin{Remark}
(i). Let $f$ be a product of $t>1$ prime numbers equal to $1$ modulo $3$. 
There are $3^t-1$ elements $h$ of order $3$ 
in the multiplicative group $({\mathbb Z}/f{\mathbb Z})^*$ 
and $\tilde s(h,f)$ may depend on the choice of $h$.
For example, take $f =91 =7\cdot 13$. 
Then $h=29$ and $h'=53$ are of order $3$ in the multiplicative group $({\mathbb Z}/f{\mathbb Z})^*$ 
but such that $-\frac{22}{91}=\tilde s(h,f)\neq \tilde s(h',f)=-\frac{46}{91}$ 
are not equal and both different from $\frac{\phi(f)}{12f} =\frac{6}{91}$. 
Moreover,
$\frac{610}{91}=\tilde S(f,\{1,h,h^2\}) \neq S(f,\{1,h',h'^2\})=\frac{562}{91}$ 
are not equal and both not given by the formula in Theorem \ref{M(f,H)} 
that gives $\tilde S(f,H_3)=\frac{666}{91}$.\\
(ii). It seems difficult to find other situations 
where results similar to those in Theorem \ref{M(f,H)} would hold true. 
For example, take the moduli of the form $f=(a^5-1)/(a-1)$ 
with $\vert a\vert\geq 2$ and $a\not\equiv 1\pmod 5$.
Then the prime divisors of $f$ are equal to $1$ modulo $5$ 
and $H_5=\{1,a,a^2,a^3,a^4\}$ is a subgroup of order $5$ of $({\mathbb Z}/f{\mathbb Z})^*$. 
An explicit formula for $M(f,H_5)$ is known in the case that $f$ is prime 
(see \cite[Theorem 5]{LouBPASM64}). 
But we have not been able to obtain an explicit formula for $M(f,H_5)$ for non prime moduli $f$.
\end{Remark}

In particular, we have \eqref{expected} for some non-cyclotomic numbers fields:

\begin{corollary}\label{thp3}
(See \cite[Theorem 1]{LouBPASM64} or \cite[Theorem 6.6]{LMCJM}).
Let $p\equiv 1\pmod 6$ be a prime integer. 
Let $K$ be the imaginary subfield of degree $(p-1)/3$ of the cyclotomic number field ${\mathbb Q}(\zeta_p)$.
Let $H_3$ be the only subgroup of order $3$ 
of the multiplicative cyclic group $({\mathbb Z}/p{\mathbb Z})^*$. 
Then 
(compare with \eqref{Mp1})
$$M(p,H_3)
:=\frac{6}{p-1}\sum_{\chi\in X_p^-(H_3)}\vert L(1,\chi)\vert^2
=\frac{\pi^2}{6}\left (1-\frac{1}{p}\right )
\leq\frac{\pi^2}{6}.$$
Hence by \eqref{hrelKbis} we have (compare with \eqref{boundhpminusQzetap})
\begin{equation}\label{boundhpminusK}
h_K^-
\leq 2\left (\frac{pM(p,H_3)}{4\pi^2}\right )^\frac{p-1}{12}
\leq 2\left (\frac{p}{24}\right )^{(p-1)/12}
\end{equation}
(note the misprint in the exponent in \cite[(8)]{LouBPASM64}), 
i.e. the expected bounds \eqref{expected} hold true.
\end{corollary}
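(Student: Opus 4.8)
The plan is to obtain everything by specializing Theorem \ref{M(f,H)} to the prime modulus $f=p$ and then feeding the resulting value of $M(p,H_3)$ into the bound \eqref{hrelKbis}; no new analytic input is required. First I would check that the hypotheses place us in the setting of Theorem \ref{M(f,H)}. The condition $p\equiv 1\pmod 6$ is equivalent to $p$ being an odd prime with $3\mid p-1$, so the cyclic group $({\mathbb Z}/p{\mathbb Z})^*$ of order $p-1$ contains a unique subgroup $H_3$ of order $3$. This is exactly the $t=1$ case of the theorem: the single prime divisor $p$ is $\equiv 1\pmod 3$, there are $2^{t-1}=1$ subgroups produced from $E_p$, and they must coincide with the unique order-$3$ subgroup. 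Since $\#H_3=3$ is odd we have $-1\notin H_3$, so $K$ is imaginary and $H_3={\rm Gal}({\mathbb Q}(\zeta_p)/K)$ is precisely the Galois group attached to the imaginary subfield $K$ of degree $\phi(p)/3=(p-1)/3$.

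Next I would evaluate the explicit formula. Taking $f=p$ in Theorem \ref{M(f,H)}, the product over prime divisors collapses to $\prod_{q\mid p}(1+1/q)=1+1/p$ while $\phi(p)/p=(p-1)/p$, so that $(1+1/p)-1/p=1$ and
$$M(p,H_3)=\frac{\pi^2}{6}\,\frac{p-1}{p}\Bigl((1+\tfrac1p)-\tfrac1p\Bigr)=\frac{\pi^2}{6}\Bigl(1-\frac1p\Bigr)\leq\frac{\pi^2}{6},$$
which is the first assertion of the corollary.

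Finally I would deduce the class number bound. As $K$ has degree $(p-1)/3<p-1$ we have $K\subsetneq{\mathbb Q}(\zeta_p)$, hence $w_K=2$ by the values recorded in \eqref{hrelKbis}. Substituting $m=(K:{\mathbb Q})=(p-1)/3$, so that the exponent is $m/4=(p-1)/12$, into \eqref{hrelKbis} and using the value just computed gives
$$\frac{pM(p,H_3)}{4\pi^2}=\frac{p}{4\pi^2}\cdot\frac{\pi^2}{6}\Bigl(1-\frac1p\Bigr)=\frac{p-1}{24}\leq\frac{p}{24},$$
whence $h_K^-\leq 2\bigl(\tfrac{p-1}{24}\bigr)^{(p-1)/12}\leq 2\bigl(\tfrac{p}{24}\bigr)^{(p-1)/12}$, which is \eqref{boundhpminusK} and shows that the expected bounds \eqref{expected} hold. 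The argument is purely a specialization of results already established, so there is no genuine obstacle; the only points demanding care are the clean collapse of the Euler product and of the $\phi(f)/f$ factor when $f$ is prime, and the correct reading $w_K=2$ for the proper subfield $K\subsetneq{\mathbb Q}(\zeta_p)$.
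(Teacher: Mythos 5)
Your proof is correct and follows essentially the route the paper intends: Corollary \ref{thp3} is presented as the $t=1$, $f=p$ specialization of Theorem \ref{M(f,H)} (where the unique order-$3$ subgroup of the cyclic group $({\mathbb Z}/p{\mathbb Z})^*$ necessarily coincides with the one generated by $E_p$), combined with \eqref{hrelKbis} for $m=(p-1)/3$ and $w_K=2$. The arithmetic $(1+\tfrac1p)-\tfrac1p=1$ and $\tfrac{p}{4\pi^2}\cdot\tfrac{\pi^2}{6}(1-\tfrac1p)=\tfrac{p-1}{24}\leq\tfrac{p}{24}$ checks out.
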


\section{On the denominator of Dedekind sums}
\begin{proposition}\label{ThRG}
(See \cite[Theorem 2 page 27]{RG}). 
For $\gcd (c,d)=1$ we have $2d\gcd (3,d)s(c,d)\in {\mathbb Z}$. 
\end{proposition}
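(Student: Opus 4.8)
The final statement to prove is Proposition~\ref{ThRG}: for $\gcd(c,d)=1$ we have $2d\gcd(3,d)s(c,d)\in\mathbb{Z}$. Since this is attributed to Rademacher--Grosswald, the plan is to give a self-contained arithmetic proof via the reciprocity law stated earlier in the excerpt, rather than appealing to the cotangent definition \eqref{defs} directly.

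The plan is to induct on $d$ using the reciprocity law
$$s(c,d)+s(d,c)=\frac{c^2+d^2-3cd+1}{12cd}\qquad(c>1,\ d>1,\ \gcd(c,d)=1),$$
together with the base cases $s(c,1)=0$ and the explicit value \eqref{s1d}. First I would observe that the claim for $d=1$ is trivial and that for $c\equiv 1\pmod d$ formula \eqref{s1d} gives $s(1,d)=\frac{(d-1)(d-2)}{12d}$; here $2d\gcd(3,d)s(1,d)=\frac{\gcd(3,d)(d-1)(d-2)}{6}$, and one checks directly that $6\mid\gcd(3,d)(d-1)(d-2)$ by splitting into the cases $3\mid d$ and $3\nmid d$ (when $3\nmid d$ one of $d-1,d-2$ is divisible by $3$, and $(d-1)(d-2)$ is always even). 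This settles the base of the induction. For the inductive step I would reduce $c$ modulo $d$ so that $0<c<d$, apply reciprocity to express $s(c,d)$ in terms of $s(d,c)=s(d\bmod c,\,c)$, which involves a strictly smaller second argument, and multiply through by $2d\gcd(3,d)$.

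The arithmetic heart of the argument, and the step I expect to be the main obstacle, is showing that the denominators genuinely clear after one application of reciprocity. Multiplying the reciprocity relation by $2cd$ gives $2cd\,s(c,d)+2cd\,s(d,c)=\frac{c^2+d^2-3cd+1}{6}$; the difficulty is that the factor $\gcd(3,d)$ appearing in the target must be tracked carefully, because the $3$-adic valuation of the denominator $12cd$ on the right interacts with whether $3\mid c$, $3\mid d$, or $3\nmid cd$. The cleanest way to handle this is to prove the slightly stronger statement that $\theta(c,d):=6cd\cdot\gcd(3,cd)^{-1}\cdot\bigl(\text{suitable normalization}\bigr)$ is integral, or more directly to verify the congruence $c^2+d^2-3cd+1\equiv 0$ to the required power of $2$ and $3$: since $\gcd(c,d)=1$ not both are even, so $c^2+d^2+1$ is even and the numerator is even, handling the factor $2$; for the factor $3$, when $3\mid d$ one uses $\gcd(3,c)=1$ so $c^2\equiv1\pmod 3$ giving $c^2+d^2-3cd+1\equiv 2\pmod 3$, which combined with the extra $\gcd(3,d)=3$ on the left-hand side absorbs the denominator, whereas when $3\nmid d$ the factor $\gcd(3,d)=1$ and one checks $c^2+d^2-3cd+1\equiv 0\pmod 3$ directly from the possible residues of $c,d$ modulo $3$.

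Assembling these, the induction closes: by the inductive hypothesis $2c\gcd(3,c)s(d,c)\in\mathbb{Z}$ with $c<d$, and combining this with the cleared reciprocity identity yields $2d\gcd(3,d)s(c,d)\in\mathbb{Z}$. I would present the $\bmod 2$ and $\bmod 3$ case analyses as short lemmatic computations inside the proof. The only genuine subtlety is bookkeeping the powers of $3$, since the factor $\gcd(3,d)$ on the left and the $\gcd(3,c)$ supplied by the induction must jointly cancel the $3$ in the denominator $12cd=6\cdot 2cd$; once the three cases ($3\mid c$, $3\mid d$, $3\nmid cd$) are separated, each becomes an elementary divisibility check.
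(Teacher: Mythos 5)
The paper gives no proof of Proposition~\ref{ThRG}; it is quoted from Rademacher--Grosswald, so your attempt has to stand on its own. As written it has a genuine gap. After one application of reciprocity you get
$2d\gcd(3,d)\,s(c,d)=\gcd(3,d)\frac{c^2+d^2-3cd+1}{6c}-2d\gcd(3,d)\,s(d,c)$,
and the obstacle is not the factor $6=2\cdot 3$ that your case analysis addresses, but the factor $c$ left in the denominator. Your inductive hypothesis only gives $2c\gcd(3,c)\,s(d,c)\in{\mathbb Z}$, so the second term is $\frac{d\gcd(3,d)}{c\gcd(3,c)}N$ with $N\in{\mathbb Z}$, which again has $c$ in the denominator; nothing in the proposal explains why the two $c$-denominators cancel. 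They do cancel, but only because of the classical congruence $12c\,s(d,c)\equiv d+\bar d\pmod c$ (with $d\bar d\equiv 1\pmod c$), which matches the residue $d^2+1$ of the numerator $c^2+d^2-3cd+1$ modulo $c$. That congruence is strictly stronger than the integrality statement you are inducting on, so the induction does not close unless you strengthen the hypothesis to carry it along (this is in fact how Rademacher--Grosswald proceed, deriving integrality and the mod-$k$ congruence together). A concrete check: for $d=7$, $c=3$ the numerator is $-4$ and $s(7,3)=1/18$, and the identity $-\tfrac{4}{18}-\tfrac{7}{9}=-1$ holds only because of the specific value of $s(7,3)$ modulo $3$, not because of any divisibility of $-4$.

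There is also an outright false step in your mod $3$ analysis: you claim that when $3\nmid d$ one has $c^2+d^2-3cd+1\equiv 0\pmod 3$ "from the possible residues of $c,d$", but if $3\mid c$ and $3\nmid d$ then $c^2+d^2-3cd+1\equiv d^2+1\equiv 2\pmod 3$. In that case the $3$ in the denominator is absorbed only through the $3$-part of $s(d,c)$ (whose denominator then contains $\gcd(3,c)=3$), again forcing you to track more than integrality through the induction. A cleaner self-contained route avoids reciprocity entirely: from $s(h,k)=\sum_{\mu=1}^{k-1}\frac{\mu}{k}\bigl(\bigl(\frac{h\mu}{k}\bigr)\bigr)$ one gets $12k\,s(h,k)=2h(k-1)(2k-1)-3k(k-1)-12\sum_{\mu}\mu\lfloor h\mu/k\rfloor\in{\mathbb Z}$, and the required divisibility of this integer by $2$ (always) and by $3$ (when $3\nmid k$) is an immediate congruence computation, with no induction and no extraneous $c$ in any denominator.
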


If $p\equiv 7\pmod {12}$ then $2p\gcd (3,p)s(1,p) =(p-1)(p-2)/6$ is an odd integer coprime with $p$ 
and the information on the denominator of the rational number $s(c,d)$ given in Proposition \ref{ThRG} 
is optimal in this case. 
Hence, $2f\gcd (3,f)S(H_n,f)\in {\mathbb Z}$, 
where $H_n$ is a subgroup of order $n$ of the multiplicative group $({\mathbb Z}/f{\mathbb Z})^*$ 
and 
$$S(H_n,f)
:=\sum_{h\in H_n}s(h,f)
\in {\mathbb Q}.$$
We always have some cancelation on the common denominator $2f\gcd (3,f)$ of the $s(h,f)$'s 
when we sum over all the elements $h$ of a subgroup of order $n>1$ 
of the multiplicative group $({\mathbb Z}/f{\mathbb Z})^*$: 

\begin{theorem}\label{prediction}
Let $H_n$ be a subgroup of order $n$ of the multiplicative group $({\mathbb Z}/f{\mathbb Z})^*$. 
Set 
$$T(H_n,f):=\sum_{h\in H_n} h
\in {\mathbb Z}/f{\mathbb Z}.$$
(i). (See \cite[Lemma 5]{LouBKMS56}). If $n>1$, then $\gcd (f,T(H_n,f))>1$.\\
(ii). (See \cite[Theorem 10]{LouBKMS56}). If $f$ is odd then the rational number
$$2\gcd (3,f)\frac{f}{\gcd (f,T(H_n,f))}S(H_n,f)$$ 
is an integer of the same parity as $n\frac{f-1}{2}$.
\end{theorem}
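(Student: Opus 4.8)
The plan is to exploit that multiplication by any element of $H_n$ fixes $T(H_n,f)$. Indeed, for $h_0\in H_n$ the map $h\mapsto h_0h$ permutes $H_n$, so in ${\mathbb Z}/f{\mathbb Z}$ we have $h_0\,T(H_n,f)=\sum_{h\in H_n}h_0h=T(H_n,f)$, whence $(h_0-1)T(H_n,f)\equiv 0\pmod f$ for every $h_0\in H_n$. If we had $\gcd(f,T(H_n,f))=1$, then $T(H_n,f)$ would be invertible modulo $f$ and we could cancel it, forcing $h_0\equiv 1\pmod f$ for all $h_0\in H_n$, i.e. $n=1$; this proves (i). More precisely, writing $g:=\gcd(f,T(H_n,f))$ and $F:=f/g$, and noting that $T(H_n,f)/g$ is prime to $F$, the same cancellation performed modulo $F$ shows that
$$h\equiv 1\pmod F\qquad(h\in H_n).$$
I would use this structural fact throughout the proof of (ii).

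\textbf{Part (ii), reduction to an integer sum.} First I would put each Dedekind sum in an arithmetic form. Writing $\langle x\rangle\in\{0,1,\dots,f-1\}$ for the least nonnegative residue of $x$ modulo $f$, the definition \eqref{defs} is equivalent to $s(h,f)=\frac1f\sum_{a=1}^{f-1}a\left(\left(\frac{ha}{f}\right)\right)$, and since $\langle ha\rangle\neq 0$ this gives
$$s(h,f)=\frac{1}{f^2}\sum_{a=1}^{f-1}a\,\langle ha\rangle-\frac{f-1}{4}$$
(one checks this recovers \eqref{s1d} when $h=1$). Summing over $h\in H_n$ yields $S(H_n,f)=\frac{1}{f^2}\sum_{a=1}^{f-1}a\,W(a)-\frac{n(f-1)}{4}$ with $W(a):=\sum_{h\in H_n}\langle ha\rangle$. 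Setting $A:=\frac{2\gcd(3,f)}{fg}\sum_{a=1}^{f-1}a\,W(a)$ and multiplying through by $2\gcd(3,f)F$, I obtain
$$2\gcd(3,f)\frac{f}{g}S(H_n,f)=A-\frac{\gcd(3,f)\,F\,n(f-1)}{2}.$$
Since $f$ is odd, both $\gcd(3,f)$ and $F=f/g$ are odd, so the second term is an integer congruent to $\tfrac{n(f-1)}{2}$ modulo $2$. Hence the whole statement (ii) reduces to showing that $A$ is an \emph{even} integer.

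\textbf{Part (ii), the cancellation of $g$.} The gain of the factor $g$ over Proposition \ref{ThRG} comes from one congruence: $W(a)\equiv\sum_{h\in H_n}ha\equiv a\,T(H_n,f)\pmod f$, and as $g\mid T(H_n,f)$ this gives $g\mid W(a)$ for every $a$. Writing $W(a)=g\,V(a)$ turns $A$ into $A=\frac{2\gcd(3,f)}{f}\sum_{a=1}^{f-1}a\,V(a)$, so the troublesome factor $g$ has disappeared and only the modulus $f$ (with $\gcd(3,f)$ absorbing the denominator $3$) survives. Expanding $\langle ha\rangle=ha-f\lfloor ha/f\rfloor$ and using $\sum_{a=1}^{f-1}a^2=\frac{(f-1)f(2f-1)}{6}$ together with $g\mid\hat T:=\sum_{h\in H_n}h$ then splits $A$ as
$$A=\frac{\gcd(3,f)\,(\hat T/g)\,(f-1)(2f-1)}{3}-\frac{2\gcd(3,f)}{g}\,Q,\qquad Q:=\sum_{h\in H_n}\sum_{a=1}^{f-1}a\Bigl\lfloor\frac{ha}{f}\Bigr\rfloor .$$
The first term is an even integer: it is integral because $3$ divides $\gcd(3,f)(f-1)(2f-1)$ in all residue cases of $f$ mod $3$, and it is even because $2\mid f-1$ while dividing by $3$ leaves the $2$-adic valuation untouched.

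\textbf{Main obstacle.} Everything thus comes down to the single $g$-divisibility $g\mid\gcd(3,f)\,Q$: granting it, $\frac{2\gcd(3,f)}{g}Q$ is automatically even, so $A$ is even and both the integrality and the parity in (ii) follow with no further $2$-adic analysis. I expect this divisibility of the floor-sum $Q$ to be the genuinely delicate step, and the place where the structural fact $h\equiv 1\pmod F$ from Part (i) must be fed in, since it rigidly constrains the jumps $\lfloor ha/f\rfloor$ as $h$ runs over $H_n$. Concretely I would establish $g\mid\gcd(3,f)\,Q$ by decomposing $\{1,\dots,f-1\}$ into orbits under multiplication by $H_n$ (on which $W$, hence the relevant residue pattern, is constant) or by induction on the number of prime factors of $f$, reducing $Q$ modulo $g$ to a sum that the congruence $h\equiv 1\pmod{F}$ collapses.
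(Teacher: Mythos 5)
First, a point of comparison: the paper does not prove Theorem \ref{prediction} at all; both parts are quoted from \cite{LouBKMS56} (its Lemma 5 and Theorem 10), so there is no in-paper argument to measure yours against. Your part (i) is complete and correct: the orbit identity $h_0T(H_n,f)=T(H_n,f)$ in ${\mathbb Z}/f{\mathbb Z}$, plus the fact that $T(H_n,f)/g$ is invertible modulo $F=f/g$, yields both $\gcd(f,T(H_n,f))>1$ for $n>1$ and the useful refinement $h\equiv 1\pmod F$ for all $h\in H_n$.

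Part (ii), however, contains a genuine gap, and you have located it yourself. Everything you actually carry out is correct: the sawtooth form $s(h,f)=\frac{1}{f^2}\sum_a a\langle ha\rangle-\frac{f-1}{4}$ (consistent with \eqref{defs} and \eqref{s1d}), the congruence $W(a)\equiv aT(H_n,f)\pmod f$ giving $g\mid W(a)$, and the integrality and parity of the term $\gcd(3,f)(\hat T/g)(f-1)(2f-1)/3$ all check out, and they correctly reduce the theorem to the single divisibility $g\mid\gcd(3,f)\,Q$ with $Q=\sum_{h}\sum_{a}a\lfloor ha/f\rfloor$. But that divisibility is precisely where the content of the theorem beyond Proposition \ref{ThRG} is concentrated --- the trivial observation $g\mid fQ_a$ coming from $g\mid f$ gives nothing --- and you do not prove it: you only announce that you ``would establish'' it by an orbit decomposition or an induction that feeds in $h\equiv 1\pmod F$, without exhibiting either argument. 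Until that step is carried out, neither the integrality nor the parity assertion of (ii) is established; what you have is an honest and well-organized reduction of the theorem to an unproved divisibility, not a proof.
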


Here is a general example which shows that the formulation of Theorem \ref{prediction} is optimal:

\noindent\frame{\vbox{
\begin{theorem}\label{SHp^nf}
Let $p\geq 3$ be prime, $f'>1$ odd and divisible by $p$ and $n\geq 1$. 
Set $f=p^nf'$ 
and
$$H_{p^n} =\{1+kf';\ 0\leq k\leq p^n-1\}
=\ker\left (({\mathbb Z}/f{\mathbb Z})^*\twoheadrightarrow ({\mathbb Z}/f'{\mathbb Z})^*\right ),$$ 
a subgroup of order $p^n$ of the multiplicative group $({\mathbb Z}/f{\mathbb Z})^*$.\\
Then 
$T(H_{p^n},f) =p^n+\frac{p^n-1}{2}f$, 
hence $\gcd (f,T(H_{p^n},f))$ $=p^n$, 
and
\begin{equation}\label{sums(1+kf',f)}
S(H_{p^n},f)
:=\sum_{h\in H_n}s(h,f)
=\sum_{k=0}^{p^n-1}s(1+kf',f)
=\frac{p^{n+1}+p^n-1}{12p^{n+1}}f
-\frac{p^n}{4}
+\frac{p^n}{6f}.
\end{equation}
Consequently (compare with Proposition \ref{ThRG}),
the rational number $2\gcd (3,f)\frac{f}{p^n}S(H_{p^n},f)$ 
is a rational integer not divisible by $p$
and it is odd if and only if $f\equiv 3\pmod 4$. 
\end{theorem}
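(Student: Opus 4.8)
The plan is to reduce the sum $S(H_{p^n},f)$ to elementary sums of squares of the sawtooth function $((x))$, exploiting the arithmetic-progression structure of $H_{p^n}$; throughout I use the classical representation $s(c,d)=\sum_{a=1}^{d-1}((a/d))((ca/d))$, equivalent to \eqref{defs}. Part (1) is a direct computation: since $H_{p^n}=\{1+kf':0\le k\le p^n-1\}$, summing the arithmetic progression gives $T(H_{p^n},f)=p^n+f'\binom{p^n}{2}=p^n+\frac{p^n-1}{2}f$, so $T\equiv p^n\pmod f$ and hence $\gcd(f,T)=\gcd(p^nf',p^n)=p^n$, using $p\mid f'$.

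For Part (2) the first step is the observation that $a(1+kf')/f=a/f+ak/p^n$, so summing over the progression and using the distribution relation $\sum_{q=0}^{N-1}((x+q/N))=((Nx))$ yields
$$\sum_{k=0}^{p^n-1}\Bigl(\Bigl(\tfrac{a}{f}+\tfrac{ak}{p^n}\Bigr)\Bigr)=\gcd(a,p^n)\,\Bigl(\Bigl(\tfrac{a'}{f'}\Bigr)\Bigr),\qquad a'=a/\gcd(a,p^n),$$
where the permutation $k\mapsto a'k$ modulo $p^n/\gcd(a,p^n)$ reduces the general increment to the basic one. Interchanging the two summations gives the compact expression $S(H_{p^n},f)=\sum_{a=1}^{f-1}\gcd(a,p^n)((a/f))((a'/f'))$.

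The key computational device is the identity
$$G(e,g):=\sum_{a\bmod p^eg}\Bigl(\Bigl(\tfrac{a}{p^eg}\Bigr)\Bigr)\Bigl(\Bigl(\tfrac{a}{g}\Bigr)\Bigr)=\sum_{r=1}^{g-1}\Bigl(\Bigl(\tfrac{r}{g}\Bigr)\Bigr)^2=\frac{(g-1)(g-2)}{12g}\qquad(e\ge1),$$
obtained by writing $a=qg+r$ and applying the distribution relation to the inner sum over $q$. I would then split the sum for $S(H_{p^n},f)$ according to $j=\min(v_p(a),n)$. The terms with $p^n\mid a$ reduce directly, via $a=p^nb$, to $p^n$ times $G(\,\cdot\,,f')$. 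For each level $j<n$ the terms contribute $p^j$ times a sum over the $a$ with $v_p(a)=j$; writing this as the full generalized Dedekind sum minus its $p\mid a$ part, both of which are instances of $G(e,\,\cdot\,)$ (the correction equalling $G(e,f'/p)$ after $a=pa_1$ and the identity $((pa_1/f'))=((a_1/(f'/p)))$), one finds that every level contributes the same quantity $W:=G(e,f')-G(e,f'/p)$, independent of $j$. Summing the geometric series $\sum_{e=1}^{n}p^{n-e}W$, adding the $p^n\mid a$ contribution, and expanding $(g-1)(g-2)/g=g-3+2/g$ then collapses everything to \eqref{sums(1+kf',f)}.

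For Part (3), note $f/p^n=f'$, so the quantity in question is $R:=2\gcd(3,f)f'\,S(H_{p^n},f)$. Since $f$ is odd and $\gcd(f,T(H_{p^n},f))=p^n$ by Part (1), Theorem \ref{prediction}(ii) already guarantees that $R\in\mathbb{Z}$ and that $R\equiv p^n\frac{f-1}{2}\pmod2$; as $p^n$ is odd, this parity is that of $\frac{f-1}{2}$, so $R$ is odd exactly when $f\equiv3\pmod4$. It remains only to show $p\nmid R$: writing $6R/\gcd(3,f)=\frac{(p^{n+1}+p^n-1)(f')^2}{p}-3p^nf'+2$ and using $p\mid f'$ with $n\ge1$, every term but the last is divisible by $p$, so $6R/\gcd(3,f)\equiv2\pmod p$, a $p$-adic unit since $p\ge3$; as $R\in\mathbb{Z}$, a valuation count (treating $p=3$ and $p\ge5$ separately, according to whether $\gcd(3,f)$ supplies a factor of $p$) then gives $v_p(R)=0$. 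I expect the main obstacle to be the level $j<n$ evaluation: the multiplier $p^e$ is not coprime to the modulus $p^ef'$, so reciprocity does not apply and one must instead identify these non-coprime generalized Dedekind sums as instances of $G(e,g)$ and verify that the per-level contributions are genuinely level-independent, so that the sum telescopes into a geometric series.
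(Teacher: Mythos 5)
Your proof is correct, and it reaches \eqref{sums(1+kf',f)} by a genuinely different computational route from the paper's. The paper works with the roots-of-unity representation \eqref{scdbis} of $s(c,d)$ and collapses the sum over $k$ via the logarithmic-derivative identity \eqref{step4bis}, then stratifies the outer index by $p^l\parallel a$ and evaluates each stratum through a fairly intricate re-indexing of $\{a:\,p\nmid a\}$ combined with \eqref{step2bis}. You instead stay entirely within rational arithmetic: the sawtooth representation plus the Raabe distribution relation collapses the $k$-sum to $\gcd(a,p^n)\,((a'/f'))$, and your identity $G(e,g)=\sum_{r=1}^{g-1}((r/g))^2=s(1,g)$ (independent of $e$) makes the level-by-level contributions visibly constant, so the telescoping into a geometric series is immediate rather than the outcome of the two-stage cancellation in the paper's \eqref{step7bis}; I checked that your decomposition by $j=\min(v_p(a),n)$, the identification of the $p\mid c$ correction with $G(e,f'/p)$ (which uses $p\mid f'$, exactly where that hypothesis enters), and the final algebra all reproduce the stated closed form. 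Your treatment of the last assertion is also sound and in fact more explicit than the paper, which leaves the ``Consequently'' clause to the reader: invoking Theorem \ref{prediction}(ii) (a prior result, so no circularity) for integrality and parity, and reducing $12f'S(H_{p^n},f)\equiv 2\pmod p$ for the non-divisibility, with the separate bookkeeping at $p=3$, is a clean way to finish. The main thing your approach buys is elementarity and transparency of the telescoping; the paper's complex-analytic identities generalize more directly to the twisted sums used elsewhere in this line of work.
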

}}

\begin{proof}
Take $d\geq 2$. 
Set $\zeta_d =\exp(2 i\pi /d)$.
Taking the logarithmic derivative of 
$\prod_{k=0}^{d-1}(x-\zeta_d^k)
=x^d-1$ at $x=1/\lambda$ 
we obtain 
\begin{equation}\label{step4bis}
\sum_{k=0}^{d-1}\frac{1}{\zeta_d^k\lambda-1}
=\frac{d}{\lambda^d-1}
\hbox{ whenever $\lambda^d\neq 1$.}
\end{equation} 
Taking the logarithmic derivative of 
$\prod_{k=1}^{d-1}(x-\zeta_d^k)
=(x^d-1)/(x-1) 
=x^{d-1}+\cdots +x+1$ at $x=1$ 
we obtain 
\begin{equation}\label{step0bis}
\sum_{k=1}^{d-1}\frac{1}{\zeta^k-1}
=-\frac{d-1}{2}.
\end{equation} 
Noticing that $\cot x =i+2i/(\exp(2ix)-1)$ and using \eqref{defs} and \eqref{step0bis}, 
we have 
\begin{equation}\label{scdbis}
s(c,d)
=\frac{d-1}{4d}-\frac{1}{d}\sum_{a=1}^{d-1}\frac{1}{(\zeta_d^a-1)(\zeta_d^{ac}-1)}
\ \ \ \ \ \hbox{(for $\gcd(c,d)=1$).}
\end{equation}
In particular, 
\begin{equation}\label{step1bis}
s(1+kf',f)
=\frac{f-1}{4f}
-\frac{1}{f}\sum_{a=1}^{f-1}\frac{1}{(\zeta_f^a-1)(\zeta_f^a\zeta_{p^n}^{ak}-1)}
\ \ \ \ \ \hbox{(for $f=p^nf'$ and $p\mid f'$),}
\end{equation}
and for $c=1$ and in using \eqref{s1d} we obtain 
\begin{equation}\label{step2bis}
\sum_{a=1}^{d-1}\frac{1}{(\zeta_d^a-1)^2}
=-\frac{(d-1)(d-5)}{12}
\ \ \ \ \ \hbox{(for $d\geq 2$).}
\end{equation}
Using \eqref{step1bis} and \eqref{step2bis}, we deduce that 
\begin{align}
\sum_{k=0}^{p^n-1}s(1+kf',f)
&=p^n\frac{f-1}{4f}
-\frac{1}{f}\sum_{a=1}^{f-1}\sum_{k=0}^{p^n-1}\frac{1}{(\zeta_f^a-1)(\zeta_f^a\zeta_{p^n}^{ak}-1)}\nonumber\\
&=p^n\frac{f-1}{4f}
-\frac{p^n}{f}\sum_{\substack{a=1\\ p^n\mid a}}^{f-1}\frac{1}{(\zeta_f^a-1)^2}
-\frac{1}{f}\sum_{\substack{a=1\\ p^n\nmid a}}^{f-1}
\sum_{k=0}^{p^n-1}\frac{1}{(\zeta_f^a-1)(\zeta_f^a\zeta_{p^n}^{ak}-1)}\nonumber\\
&=p^n\frac{f-1}{4f}
+\frac{(f-p^n)(f-5p^n)}{12p^nf}
-\frac{1}{f}\sum_{l=0}^{n-1}
\sum_{\substack{a=1\\ p^l\parallel a}}^{f-1}
\sum_{k=0}^{p^n-1}
\frac{1}{(\zeta_f^a-1)(\zeta_f^a\zeta_{p^n}^{ak}-1)}.\label{step5bis}
\end{align}
Now, using \eqref{step4bis} we obtain
\begin{align}
\sum_{\substack{a=1\\ p^l\parallel a}}^{f-1}
\sum_{k=0}^{p^n-1}
\frac{1}{(\zeta_f^a-1)(\zeta_f^a\zeta_{p^n}^{ak}-1)}
&=\sum_{\substack{a=1\\ p\nmid a}}^{f/p^l-1}
\sum_{k=0}^{p^n-1}
\frac{1}{(\zeta_{f/p^l}^a-1)(\zeta_{f/p^l}^a\zeta_{p^{n-l}}^{ak}-1)}\nonumber\\
&=\sum_{\substack{a=1\\ p\nmid a}}^{f/p^l-1}
\sum_{k=0}^{p^{n-l}-1}
\frac{p^l}{(\zeta_{f/p^l}^a-1)(\zeta_{f/p^l}^a\zeta_{p^{n-l}}^{ak}-1)}\nonumber\\
&=\sum_{\substack{a=1\\ p\nmid a}}^{f/p^l-1}\label{step6bis}
\frac{p^n}{(\zeta_{f/p^l}^a-1)(\zeta_{f/p^n}^a-1)}.
\end{align}
Since $\{a;\ 1\leq a\leq f/p^l-1\hbox{ and }p\nmid a\}
=\{Af/p^n+B;\ 0\leq A\leq p^{n-l}-1\hbox{ and }1\leq B\leq f/p^n-1\}$
$\setminus\{Af/p^n+pB;\ 0\leq A\leq p^{n-l}-1\hbox{ and }1\leq B\leq f/p^{n+1}-1\}$,
we have
\begin{align}
\sum_{\substack{a=1\\ p\nmid a}}^{f/p^l-1}
\frac{1}{(\zeta_{f/p^l}^a-1)(\zeta_{f/p^n}^a-1)}
&=\sum_{B=1}^{f/p^n-1}\frac{1}{\zeta_{f/p^n}^B-1}
\sum_{A=0}^{p^{n-l}-1}\frac{1}{\zeta_{p^{n-l}}^A\zeta_{f/p^l}^B-1}
\nonumber\\
&-\sum_{B=1}^{f/p^{n+1}-1}\frac{1}{\zeta_{f/p^{n+1}}^B-1}
\sum_{A=0}^{p^{n-l}-1}\frac{1}{\zeta_{p^{n-l}}^A\zeta_{f/p^{l+1}}^B-1}
\nonumber\\
&=\sum_{B=1}^{f/p^n-1}\frac{p^{n-l}}{(\zeta_{f/p^n}^B-1)^2}
-\sum_{B=1}^{f/p^{n+1}-1}\frac{p^{n-l}}{(\zeta_{f/p^{n+1}}^B-1)^2}\nonumber\\
&=-p^{-l}\frac{(f-p^n)(f-5p^n)}{12p^n}
+p^{-l}\frac{(f-p^{n+1})(f-5p^{n+1})}{12p^{n+2}},\label{step7bis}
\end{align}
by \eqref{step4bis} and \eqref{step2bis}. 
Finally, using \eqref{step5bis}, \eqref{step6bis} and \eqref{step7bis} we obtain
$$S(H_{p^n},f)
=p^n\frac{f-1}{4f}
+\frac{(f-p^n)(f-5p^n)}{12p^nf}
+\frac{1-p^{-n}}{1-p^{-1}}\frac{(f-p^n)(f-5p^n)}{12f}
-\frac{1-p^{-n}}{1-p^{-1}}\frac{(f-p^{n+1})(f-5p^{n+1})}{12p^{2}f}$$
and the desired formula.
\end{proof}

\begin{corollary}\label{elementsofsmallorder}
Let $p\geq 3$ be prime and $f=p^m$ with $m\geq 2$. 
Asssume that $1\leq n\leq m-1$.
Then $E_{p^n}=\{1+kf/p^n;\ 1\leq k\leq p^n-1\hbox{ and }\gcd (p,k)=1\}$ 
is the set of the $\phi(p^n) =p^{n-1}(p-1)$ elements of order $p^n$ of the multiplicative cyclic group 
$({\mathbb Z}/p^m{\mathbb Z})^*$
and we have the following mean value formula
$$\frac{1}{\# E_{p^n}}\sum_{h\in E_{p^n}}s(h,f)
=\frac{f}{12p^{2n}}-\frac{1}{4}+\frac{1}{6f}
\hbox{ for } 1\leq n\leq m-1.$$
\end{corollary}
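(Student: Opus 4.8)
The plan is to first pin down $E_{p^n}$ group-theoretically, then obtain the sum $\sum_{h\in E_{p^n}}s(h,f)$ by a telescoping argument that reduces everything to the explicit evaluation furnished by Theorem \ref{SHp^nf}. First I would recall that for $p$ odd the group $({\mathbb Z}/p^m{\mathbb Z})^*$ is cyclic of order $p^{m-1}(p-1)$, so for $1\le n\le m-1$ it has a unique subgroup of order $p^n$, which is exactly the kernel $H_{p^n}$ of the reduction $({\mathbb Z}/p^m{\mathbb Z})^*\twoheadrightarrow({\mathbb Z}/p^{m-n}{\mathbb Z})^*$, i.e. the set $\{1+kf/p^n;\ 0\le k\le p^n-1\}$ of residues $\equiv 1\pmod{p^{m-n}}$. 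Using the standard fact that for $p$ odd one has $(1+ap^r)^{p^s}\equiv 1+ap^{r+s}\pmod{p^{r+s+1}}$ when $r\ge 1$ (so that $(1+kp^{m-n})^{p^{n-1}}\equiv 1+kp^{m-1}\pmod{p^m}$), I would show that $1+kf/p^n$ has order exactly $p^n$ precisely when $\gcd(p,k)=1$ and order dividing $p^{n-1}$ otherwise. This identifies $E_{p^n}$ with the asserted set and gives $\#E_{p^n}=\phi(p^n)=p^{n-1}(p-1)$.

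The key observation is then that, since $({\mathbb Z}/p^m{\mathbb Z})^*$ is cyclic, every element of $H_{p^n}$ has order $p^j$ for a unique $0\le j\le n$, so $H_{p^n}$ is the disjoint union $\bigsqcup_{j=0}^{n}E_{p^j}$ (with $E_{p^0}=\{1\}$). Summing $s(\cdot,f)$ over this partition yields $S(H_{p^n},f)=\sum_{j=0}^{n}\big(\sum_{h\in E_{p^j}}s(h,f)\big)$, whence the telescoping identity
$$\sum_{h\in E_{p^n}}s(h,f)=S(H_{p^n},f)-S(H_{p^{n-1}},f).$$
To evaluate the right-hand side I would apply Theorem \ref{SHp^nf} with $f'=p^{m-n}$, so that $f=p^nf'=p^m$ and the theorem's subgroup is exactly our $H_{p^n}$; here the hypotheses hold because $f'=p^{m-n}>1$ is odd and divisible by $p$ precisely when $1\le n\le m-1$. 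Applying the theorem once more with $f'=p^{m-n+1}$ gives the closed form for $S(H_{p^{n-1}},f)$.

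It then only remains to subtract the two closed forms, divide by $\#E_{p^n}=p^{n-1}(p-1)$, and simplify; the differences of the $f$-linear, constant, and $1/f$ terms each produce a factor $p-1$ which cancels, leaving $\frac{f}{12p^{2n}}-\frac14+\frac1{6f}$. The only genuinely delicate point is the boundary case $n=1$: there $S(H_{p^{n-1}},f)=S(\{1\},f)=s(1,f)$, which is not literally covered by Theorem \ref{SHp^nf}; I would supply this value directly from \eqref{s1d} and check that $s(1,f)=\frac{(f-1)(f-2)}{12f}$ coincides with the formal $n=0$ specialization of the theorem's formula, so that the telescoping is uniform across all $1\le n\le m-1$. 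I do not anticipate any real obstacle beyond this edge case and the routine algebra of the final subtraction, since the heavy analytic work has already been done in Theorem \ref{SHp^nf}.
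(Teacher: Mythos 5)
Your proposal is correct and follows essentially the same route as the paper: write $E_{p^n}=H_{p^n}\setminus H_{p^{n-1}}$, so that $\sum_{h\in E_{p^n}}s(h,f)=S(H_{p^n},f)-S(H_{p^{n-1}},f)$, and evaluate both terms via formula \eqref{sums(1+kf',f)} of Theorem \ref{SHp^nf} (the paper applies it ``for $n$ and $n-1$'' exactly as you do). Your extra care with the $n=1$ boundary, checking that $s(1,f)=\frac{(f-1)(f-2)}{12f}$ agrees with the formal $n=0$ specialization, is a sound way to handle a point the paper leaves implicit.
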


\begin{proof}
We have 
$$\frac{1}{\# E_{p^n}}\sum_{h\in E_{p^n}}s(h,f)
=\frac{1}{p^{n-1}(p-1)}\left (
\sum_{k=1}^{p^n-1}s(1+kf/p^n,f)
-\sum_{k=1}^{p^{n-1}-1}s(1+kf/p^{n-1},f)
\right ).$$
Using \eqref{sums(1+kf',f)} for $n$ and $n-1$ the desired result follows.
\end{proof}

\begin{Remark}
Assume that $1\leq n\leq m-1$.
A Dirichlet character $\chi$ modulo $f =p^m$ is trivial on the subgroup $H_{p^n}$ 
if and only if it is induced by a Dirichlet character $\chi'$ modulo $f' =f/p^n$
and in that situation we have $L(1,\chi) =L(1,\chi')$. 
Therefore, noticing that $p^n/\phi (f) =1/\phi(f')$, 
we have 
$M(f,H_{p^n}) =M(f',\{1\})$. 
Now, on the one hand by \eqref{Mf1} we have 
$$M(f',\{1\})
=\frac{\pi^2}{6}\left (1-\frac{1}{p}\right )\left (1+\frac{1}{p}-\frac{3}{f'}\right ).$$
On the other hand by \eqref{firstformula} and \eqref{tildescddc} we have
$$M(f,H_{p^n})
=\frac{2\pi^2}{f}
\left (\sum_{h\in H_{p^n}}s(h,f)-\frac{1}{p}\sum_{h\in H_{p^n}}s(h,f/p)\right )
=\frac{2\pi^2}{f}
\left (S(H_{p^n},f)-S(H_{p^{n-1}},f/p)\right ).$$
Using Theorem \ref{SHp^nf}, we do recover that $M(f,H_{p^n}) =M(f',\{1\})$.
\end{Remark}

While checking the statements of Theorem \ref{SHp^nf} and Corollary \ref{elementsofsmallorder} 
on various values of $p$, $f'$, $n$ and $m$, 
we came across the following surprising Theorem 
which in the range $1\leq n\leq m/2$ is much more precise than Corollary \ref{elementsofsmallorder} 
and implies Corollary \ref{elementsofsmallorder}:

\noindent\frame{\vbox{
\begin{theorem}
Assume that $f\geq 1$ divides $f'^2$ and that $f'$ divides $f$. 
Then for $k\in {\mathbb Z}$ we have $\gcd(1+kf',f)=1$ and
\begin{equation}\label{propositions(1+kf',f)}
s(1+kf',f)
=\frac{f'^2}{12f}-\frac{1}{4}+\frac{1}{6f}
\hbox{ for $\gcd (k,f)=1$}.
\end{equation}
In particular, for $p\geq 3$ is prime, $f=p^m$ and $1\leq n\leq m/2$,
we have 
\begin{equation}\label{s(1+kf/p^n,f)}
s(1+kf/p^{n},f)
=\frac{f}{12p^{2n}}-\frac{1}{4}+\frac{1}{6f}
\hbox{ for $1\leq k\leq p^n-1$ and $\gcd(k,p)=1$.}
\end{equation} 
Therefore, for $m\geq 2$ and $1\leq n\leq m/2$, 
the Dedekind sums $s(h,p^m)$ 
do not depend on $h$ 
as $h$ runs over the $\phi(p^n) =p^{n-1}(p-1)$ elements of order $p^n$ 
of the multiplicative cyclic group $({\mathbb Z}/p^m{\mathbb Z})^*$.
\end{theorem}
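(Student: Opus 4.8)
The plan is to evaluate $s(1+kf',f)$ directly from the root-of-unity expression \eqref{scdbis}, exploiting a collapse in an inner geometric-type sum that is forced precisely by the hypothesis $f\mid f'^2$. First I would record the arithmetic meaning of the hypotheses: writing $g:=f/f'$, the condition $f'\mid f$ just says $g\in\mathbb{Z}$, while $f\mid f'^2$ is equivalent to $g\mid f'$ (equivalently $g^2\mid f$). In particular every prime dividing $f$ divides $f'$, so $1+kf'\equiv 1$ modulo each such prime and $\gcd(1+kf',f)=1$ for every $k$, which disposes of the first assertion. From now on I assume $\gcd(k,f)=1$, hence $\gcd(k,g)=1$ as well.

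Next, apply \eqref{scdbis} with $c=1+kf'$ and $d=f$. Since $\zeta_f^{f'}=\zeta_g$, one has $\zeta_f^{ac}=\zeta_f^{a}\zeta_g^{ak}$, so that
\[
s(1+kf',f)=\frac{f-1}{4f}-\frac1f\,T,\qquad T:=\sum_{a=1}^{f-1}\frac{1}{(\zeta_f^{a}-1)(\zeta_f^{a}\zeta_g^{ak}-1)}.
\]
I would split $T$ according to whether $g\mid a$. The terms with $g\mid a$ contribute $\sum_{b=1}^{f'-1}(\zeta_{f'}^{b}-1)^{-2}$ (put $a=gb$, so $\zeta_f^{a}=\zeta_{f'}^{b}$ and $\zeta_g^{ak}=1$), which equals $-\tfrac{(f'-1)(f'-5)}{12}$ by \eqref{step2bis}.

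For the terms with $g\nmid a$ I would group $a$ by its residue $r=a\bmod g\in\{1,\dots,g-1\}$, writing $a=r+jg$ with $0\le j\le f'-1$, so that $\zeta_f^{a}=\alpha\zeta_{f'}^{j}$ with $\alpha:=\zeta_f^{r}$ and $\zeta_f^{a}\zeta_g^{ak}=\beta\zeta_{f'}^{j}$ with $\beta:=\zeta_f^{r}\zeta_g^{rk}$. Partial fractions in $\zeta_{f'}^{j}$ together with \eqref{step4bis} then evaluate the inner sum over $j$. The crux is that $\alpha^{f'}=\zeta_g^{r}$ and, because $g\mid f'$ forces $\zeta_g^{rkf'}=1$, also $\beta^{f'}=\zeta_g^{r}$; since $\gcd(k,g)=1$ and $1\le r\le g-1$ give $\alpha\ne\beta$ and $\zeta_g^{r}\ne1$, the two partial-fraction pieces share the denominator $\zeta_g^{r}-1$ and collapse to
\[
\sum_{j=0}^{f'-1}\frac{1}{(\alpha\zeta_{f'}^{j}-1)(\beta\zeta_{f'}^{j}-1)}=\frac{f'}{1-\zeta_g^{r}},
\]
which is independent of $k$. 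Summing over $r$ and invoking \eqref{step0bis} gives $\sum_{r=1}^{g-1}\frac{f'}{1-\zeta_g^{r}}=\frac{f'(g-1)}2=\frac{f-f'}2$. Collecting the two parts yields $T=-\tfrac{(f'-1)(f'-5)}{12}+\tfrac{f-f'}2=\tfrac{6f-f'^2-5}{12}$, whence $s(1+kf',f)=\tfrac{f-1}{4f}-\tfrac1f T=\tfrac{f'^2-3f+2}{12f}$, which is exactly \eqref{propositions(1+kf',f)}.

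The main obstacle is this inner-sum collapse: one must verify that the hypothesis $f\mid f'^2$ (in the form $g\mid f'$) is precisely what makes $\alpha^{f'}=\beta^{f'}$, since this is what removes the $k$-dependence; without it the inner sum would retain a factor $\zeta_g^{rkf'}$ and the value would genuinely depend on $k$ (as happens for $n>m/2$, where only the weaker averaged Corollary \ref{elementsofsmallorder} survives). Finally, the specializations are immediate: taking $f=p^m$ and $f'=p^{m-n}$, the relation $f\mid f'^2$ reads $n\le m/2$, giving \eqref{s(1+kf/p^n,f)}; and since by Corollary \ref{elementsofsmallorder} the elements of order $p^n$ in $(\mathbb{Z}/p^m\mathbb{Z})^*$ are exactly the $1+kp^{m-n}$ with $\gcd(k,p)=1$, the value $s(h,p^m)$ is the same for all of them, proving the last assertion.
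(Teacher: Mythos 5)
Your proof is correct and follows essentially the same route as the paper's: the same reduction via \eqref{scdbis}, the same decomposition of the summation index modulo $g=f/f'$ (the paper's $q$), the same key observation that $g\mid f'$ forces $\alpha^{f'}=\beta^{f'}=\zeta_g^{r}$ so that \eqref{step4bis} collapses the inner sum to a $k$-independent quantity, and the same final assembly using \eqref{step2bis} and \eqref{step0bis}. You even correct a harmless slip in the paper (the outer residue ranges over $1\leq r\leq g-1$, not $1\leq B\leq f'-1$) and supply the easy $\gcd(1+kf',f)=1$ argument the paper leaves implicit.
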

}}

\begin{proof}
Set $q=f/f'$. Notice that $q$ divides $f'$.
By \eqref{scdbis}, proving \eqref{propositions(1+kf',f)} is equivalent to proving that:
$$\sum_{a=1}^{f-1}\frac{1}{(\zeta_{f}^a-1)(\zeta_{f}^a\zeta_{q}^{ak}-1)}
=\frac{6f-f'^2-5}{12}.$$
Write $a=Aq+B$ 
with $0\leq A\leq f'-1$, $0\leq B\leq q-1$ and $(A,B)\neq (0,0)$.
Then $(\zeta_{f}^a-1)(\zeta_{f}^a\zeta_{q}^{ak}-1)$
$=(\lambda\zeta_{f'}^A-1)(\mu\zeta_{f'}^A-1)$, 
where 
$\lambda 
=\lambda_B
=\zeta_{f}^{B}$
and 
$\mu
=\mu_{B,k}
=\zeta_{f}^B\zeta_{q}^{Bk}$. 
Since $\gcd (k,f)=1$ and $1\leq B\leq f'-1$, we have $\lambda\neq\mu$ and 
$$\frac{1}{(\lambda\zeta_{f'}^A-1)(\mu\zeta_{f'}^A-1)}
=-\frac{1}{\lambda-\mu}\left (\frac{\lambda}{\lambda\zeta_{f'}^A-1}-\frac{\mu}{\mu\zeta_{f'}^A-1}\right ).$$ 
Noticing that $\lambda^{f'} =\mu^{f'} =\zeta_{q}^B$, as $q\mid f'$, 
and using \eqref{step4bis} we get 
$$\sum_{A=0}^{f'-1}\frac{1}{(\lambda_B\zeta_{f'}^A-1)(\mu_{B,k}\zeta_{f'}^A-1)}
=-\frac{f'}{\zeta_{q}^B-1}
\hbox{ for $1\leq B\leq f'-1$}.$$
Therefore, we do have 
\begin{multline*}
\sum_{a=1}^{f-1}\frac{1}{(\zeta_{f}^a-1)(\zeta_{f}^a\zeta_{q}^{ak}-1)}
=\sum_{A=1}^{f'-1}\frac{1}{(\zeta_{f'}^A-1)^2}
+\sum_{B=1}^{q-1}\sum_{A=0}^{f'-1}\frac{1}{(\lambda_B\zeta_{f'}^A-1)(\mu_{B,k}\zeta_{f'}^A-1)}\\
=-\frac{(f'-1)(f'-5)}{12}
-f'\sum_{B=1}^{q-1}\frac{1}{\zeta_{q}^B-1}
=-\frac{(f'-1)(f'-5)}{12}+\frac{f'(q-1)}{2}
=\frac{6f-f'^2-5}{12},
\end{multline*}
by \eqref{step2bis} and \eqref{step0bis}.
\end{proof}

\begin{Remark}
The restriction $1\leq n\leq m/2$ is of paramount importance: 
for $m/2<n\leq m-1$ the Dedekind sum $s(1+kp^{m-n},p^m)$ may depend on $k$ with $1\leq k\leq p^n-1$ 
and $\gcd(k,p)=1$.
\end{Remark}

{\small
\bibliography{central}

}
\end{document}